\definecolor{bleu}{RGB}{27,88,145}
\definecolor{mauve}{RGB}{138,20,79}
\renewcommand{\Re}{\operatorname{Re}}
\newcommand{\Id}{\operatorname{Id}}
\newcommand{\supp}{\operatorname{supp}}
\newcommand{\Span}{\operatorname{Span}}
\newcommand{\N}{\mathbf N}
\newcommand{\R}{\mathbf R}
\newcommand{\Hess}{\operatorname{Hess}}
\newcommand{\Jac}{\operatorname{Jac}}
\newcommand{\Ran}{\operatorname{Ran}}
\newcommand{\argmin}{\operatorname{argmin}}
\newcommand{\tr}{\operatorname{tr}}
\renewcommand{\div}{\operatorname{div}}
\def\<{\langle}
\def\>{\rangle}
\newcommand{\bp}{{\it Proof. }}
\newcommand{\ep}{\hfill $\square$\\}
\def \jac {{\rm \, Jac }}
\newcommand{\be}{\begin{equation}}
\newcommand{\ee}{\end{equation}}
\newcommand{\bes}{\begin{equation*}}
\newcommand{\ees}{\end{equation*}}
\newcommand {\pa}{\partial}
\def\Re {{\rm \,Re\;}}
\def \tr{{\rm \, Tr\;}}
\def \jac {{\rm \, Jac }}
\numberwithin{equation}{section}
\numberwithin{figure}{section}
\newtheorem{theorem}{Theorem}
\newtheorem*{theorem*}{Theorem}
\newtheorem{proposition}{Proposition}
\newtheorem*{definition*}{Definition}
\newtheorem{lemma}[proposition]{Lemma}
\newtheorem{remark}[proposition]{Remark}
\newtheorem*{assumption*}{Assumption}
 \title[Exit time  of non-Gibbsian elliptic processes]{Eyring-Kramers formula for the mean exit time of  non-Gibbsian elliptic processes: the non characteristic boundary case}
\author[D. Le Peutrec]{Dorian Le Peutrec}
\address{D. Le Peutrec, Laboratoire de Mathématiques Jean Leray, 
Université de Nantes}
\email{dorian.lepeutrec@univ-nantes.fr}
\author[L. Michel]{Laurent Michel}
\address{L. Michel, Institut Math\'ematique de Bordeaux, Universit\'e de Bordeaux}
\email{laurent.michel@math.u-bordeaux.fr}
\author[B. Nectoux]{Boris Nectoux}
\address{B. Nectoux, Laboratoire de Mathématiques Blaise Pascal, UCA}
\email{boris.nectoux@uca.fr}
\begin{document} 
 \maketitle
    \begin{abstract}
 In this  work, we  derive  a new sharp asymptotic equivalent in the small temperature regime   $h\to 0$ for the mean exit time  from a bounded  domain      for the non-reversible process
  $dX_t=b(X_t)dt + \sqrt h \, dB_t$ under a generic orthogonal decomposition  of $b$ and when the boundary of $\Omega$ is assumed to be \textit{non characteristic}.    
The main contribution  of   this work   lies in the fact that we do not assume  that the process $(X_t,t\ge 0)$  is \textit{Gibbsian}. In this case, a new correction term  characterizing the \textit{non-Gibbsianness} of the process appears in the   equivalent of the mean exit time. The proof is mainly based on tools from spectral  and semi-classical analysis. 
    \medskip

\noindent \textbf{Keywords.}  Eyring-Kramers type formulas, spectral analysis, mean exit time, principal eigenvalue, non-reversible elliptic processes. \\
 \textbf{AMS classification.}  60J60, 35P15,   35Q82, 47F05, 60F10. 
   
 \end{abstract} 

\section{Introduction and main result}\label{sec1}


\subsection{Setting and assumptions}\label{sec.geo}
Let  $(\mathbf \Omega, \mathcal F, (\mathcal F_t)_{t\ge 0}, \mathbf P)$ be   a  filtered probability space, where the filtration satisfies the usual conditions, and   $(B_t,t\ge 0)$ be a $\mathbf R^d$-standard Brownian motion. 
Let $(X_t,\,t\ge 0)$ be the process    solution to the following elliptic It\^o stochastic differential equation on $\mathbf R^d$,
\begin{equation}\label{eq.langevin}
dX_t= b(X_t)\, dt +\sqrt h \, dB_t,
\end{equation} 
where $h>0$ and $b:\mathbf R^d\to \mathbf R^d$  is a smooth vector field.  The parameter $h>0$ is the temperature of the system. In this work, we   consider    the small temperature regime, i.e. $h\ll 1$, see Section \ref{sec.purp}, and we    make the following assumptions on the vector field $b$. There exist $\ell :\mathbf R^d\to \mathbf R^d$ and  $f:\mathbf R^d\to \mathbf R$  which are both $\mathcal C^\infty$ such that:
\begin{enumerate}
\item [] \textbf{[A$\perp$]}  The vector field  $b$ satisfies the following orthogonal decomposition:
$$
\text{for all } x\in \mathbf R^d, \  b(x)=-(\nabla f(x)  +  \ell (x) )
 \quad\text{and}\quad \ell (x) \cdot \nabla f(x) =0.$$

\item []\textbf{[A$_{x_0}$]} The function $f$ has a unique critical point $x_0$ in  $\mathbf R^d$. Moreover, the Hessian matrix $\Hess f(x_0)$ at   $x_0$ is positive definite. 
\medskip

\item []\textbf{[A$_{\infty}$]} All the derivatives of $f$ (resp. of $\ell $) of order larger or equal than $2$ (resp. than~$1$) are bounded over $\mathbf R^d$. In addition,  $f$ is  bounded from below and   there exist $C,R>0$  such that 
\begin{equation*}\label{eq.coerive2}
\forall |x|\geq R,\; |\nabla f(x)|\geq C.
\end{equation*}

\end{enumerate}

 Assumption {\rm \textbf{[A$\perp$]}}  is rather generic as  explained  in Section \ref{sec.Ap} below. Besides, the   non local assumptions on  the vector field $b$ made in  {\rm \textbf{[A$\perp$]}}, {\rm \textbf{[A$_{x_0}$]}}, and {\rm \textbf{[A$_{\infty}$]}} are unnecessary for our main result to hold, see the note just after Theorem \ref{th.1}.
  In all this work, we   always assume  that  {\rm \textbf{[A$\perp$]}}, {\rm \textbf{[A$_{x_0}$]}}, and {\rm \textbf{[A$_{\infty}$]}} are satisfied. 
 In addition, throughout this work,   $\Omega$ is a $\mathcal C^\infty$ bounded subdomain\footnote{We recall that by definition, a domain is a nonempty connected open set.} of $\mathbf R^d$ containing $x_0\in \Omega$. The quantity of interest in this work is the first exit time from $\Omega$ for the process \eqref{eq.langevin} which will be  denoted by $\tau_\Omega$, i.e.  $$\tau_\Omega:=\inf\{t\ge 0, X_t\notin \Omega\}.$$ 

Let us mention that  the vector field $b$ does not vanish over $\partial \Omega$. In this case,   $\partial \Omega$ is said to be   non characteristic  and the   so-called \textit{generalized saddle points} $z$ of $f$ on $\partial \Omega$ (see the set $\mathscr P_{\rm sp}$ defined \eqref{eq.PP}) will    play a crucial role in the asymptotic formula of  the mean exit time $\mathbf E[\tau_\Omega]$ from $\Omega$.  Such critical points are associated with edge shaped barriers (or reflected barriers) and were considered by Kramers in its celebrated work~\cite{Kra}, see also~\cite[p. 836-837]{MATK1982}.

   
   \subsection{Purpose of this work} 
\label{sec.purp}
 In this work, we derive  a new sharp asymptotic formula in the limit $h\to 0$ (and actually prove a complete asymptotic expansion in power of $h$)  of the mean exit time $\mathbf E[\tau_\Omega]$  from $\Omega$ for the process \eqref{eq.langevin} when $X_0=x\in \Omega$ (uniformly in the relevant  compact sets), see our main result below,  Theorem \ref{th.1}. 
 
 Contrary to previous contributions in this field (see Section \ref{sec.lib}), 
we do not assume the process  \eqref{eq.langevin}  to be Gibbsian, which is the main novelty 
of this work.
More precisely, we do not assume  that 
the Gibbs measure $$\mu_{\rm Gibbs}(dx):=e^{-\frac 2h f}dx$$ is  invariant for the process \eqref{eq.langevin} (where $dx$ is the Lebesgue measure  over $\mathbf R^d$),
or equivalently that  $\div(\ell) =0$ over $\mathbf R^d$.

This
has    a strong impact on the Eyring-Kramers formula we derive in Theorem~\ref{th.1} for the mean exit time from $\Omega$ for the process \eqref{eq.langevin}. Indeed, compared to the  Gibbsian case,  the  new (non local) terms $\exp [\int_ 0^{+\infty}  \div (\ell)(\psi_t(z))dt ]$, which are  attached to each generalized saddle point $z$  of $f$ over $\partial  \Omega$,  appear in the pre-factor as a measure of the non-Gibbsianness of the process, see \eqref{eq.EK-PF}. 
Let us emphasize that these 
terms can be greater or bigger than $1$, see the comments following 
Theorem~\ref{th.1}. 
Hence, in comparison with the Gibbsian case,  both an acceleration or a deceleration of the exit time from~$\Omega$ can occur.
A very similar formula   was already derived  in \cite{BoRe16} with formal computations when  the saddle points $z$ are  critical points of $f$. In this case, the difference appears in the fact that  the integral of $\div (\ell)(\psi_t(z))$ runs from $-\infty$ to $+\infty$. The    non-Gibbsianness of the  process  \eqref{eq.langevin} introduces several difficulties  in the analysis of the precise $h$-limit of the mean exit time from $\Omega$. 
This follows   in particular  from the fact that a sufficiently good approximation (at least in $\mathcal C^1$-norm) of the non explicit  invariant probability density~$p_h$ is  needed. 

In addition, the spectral analysis performed here is more involved than in our previous work in the Gibbsian case~\cite{peutrecCMP}, with 
the construction of a different and much more precise quasi-mode for the principal eigenfunction of the infinitesimal generator of \eqref{eq.langevin} than the one built in~\cite[Section 5.2]{peutrecCMP}.
As a by-product, to derive a sharp asymptotic formula in the limit $h\to 0$ for  $\mathbf E[\tau_\Omega]$,   we do not assume that the vector field $\ell$ vanishes  at each  generalized saddle point $z$ of $f$  on $\partial \Omega$
as it was
the case in \cite[Section 5.1]{peutrecCMP}. In particular, when the process
 \eqref{eq.langevin}   is  Gibbsian,   Theorem \ref{th.1} improves the statement of \cite[Theorem~2]{peutrecCMP} in the case where 
 the vector field $b$ does not vanish over $\partial \Omega$\footnote{Let us mention that this technical issue did not appear at a critical point.}.


Let us finally mention that  we do not neither assume $b\cdot n _{\Omega}<0$ over $\partial \Omega$, where $n _{\Omega}(z)$ denotes the unit outward normal  vector to $\partial \Omega$ at $z\in \partial \Omega$, as it
was the case  for technical reasons in  the classical  pioneering works dealing with  the exit event from a bounded domain when $h\to 0$, see e.g.~\cite{DeFr,day-83,day1987r} or~\cite[Chapter 4]{FrWe}.

\begin{remark}
This work opens up several possibilities, such as considering the case where $b$ has a critical saddle point on the boundary of $\Omega$. The behavior as $h\to 0$ of the invariant measure near such a point is tricky to derive. Such a situation  is left to a future work. 
\end{remark}

      \subsection{Direct consequences of Assumptions {\rm \textbf{[A$\perp$]}}, {\rm \textbf{[A$_{x_0}$]}}, and {\rm \textbf{[A$_{\infty}$]}}} 
\label{sec.As}
 In this section, we  give some  consequences of {\rm \textbf{[A$\perp$]}}, {\rm \textbf{[A$_{x_0}$]}}, and {\rm \textbf{[A$_{\infty}$]}} which will be used in this work.     On the one hand, note  that \textbf{[A$_{\infty}$]} implies that 
$f$ is coercive, i.e.
\begin{equation}\label{eq.coerive}
\text{$f(x)\to +\infty$ as $|x|\to +\infty$.}
\end{equation}
Furthermore,  \textbf{[A$\perp$]}  and \textbf{[A$_{\infty}$]} imply that for all $x\in \mathbf R^d$,  the curve~$t\mapsto \varphi_{t}(x)$ solution on $\mathbf R^d$ to 
\begin{equation}\label{eq.flow}
\frac{d}{dt}\varphi_{t}(x)  = b(\varphi_{t}(x)), \ \varphi_{0}(x)=x,
   \end{equation}
is well defined over $\mathbf R_+$. 
Moreover, when in addition  \textbf{[A$_{x_0}$]} is satisfied,  $x_0$ is the unique local minimum of the function $f$ and  it holds $\{z\in \mathbf R^d, b(z)=0\}=\{x_0\}$ (in particular $\ell (x_0)=0$). For all $x\in \mathbf R^d$,   the $\omega$-limit set  $\omega(x)$ of $x$ is reduced to $\{x_0\}$ (see~\cite[Section 1.3]{peutrecCMP}), i.e.   $$\omega(x)=\{x_0\}.$$
 In addition, thanks to
 \cite[Lemma 1.4]{BoLePMi22} (see also~\cite{landim2018metastability} for a similar result),  the matrix $\Jac b(x_0)=-^t(\Hess f(x_0)+^t\Jac\ell (x_0))  $ admits precisely $d$ eigenvalues which   all have a negative real part.    Finally,  the relation 
 $\ell\cdot\nabla f=0$ yields that the matrix $\Hess(f)(x_0)\Jac\ell (x_0)$ is antisymmetric and hence $\div \ell (x_0)=\tr(\Jac\ell (x_0))=0$.

 
  \subsection{Preliminary analysis and generalized saddle points} 
  
  \subsubsection{The domain $\Omega$  is a well of potential}
Recall that $\Omega$ is assumed to be a $\mathcal C^\infty$ bounded domain of $\R^{d}$ containing~$x_0$. Let us define 
\begin{equation*}
\label{eq.cmin}
\mathscr{C}_{{\rm min}}:=    
\Omega \cap \{f<\min_{\partial \Omega}f  \},
\end{equation*} 
where for  $\mu\in \mathbf R$,
we use the notation $  \{f< \mu\}:=\{x\in \mathbf R^d,\  f(x)< \mu\}$. The sets  $\{f\le  \mu\}$ and $\{f=\mu\}$ are defined similarly. 

When {\rm \textbf{[A$_{x_0}$]}} and \textbf{[A$_{\infty}$]} are satisfied, we are thus in
the following geometrical situation which shows that the domain $\Omega$  looks like a single well of the potential 
function $f:\mathbf R^d\to \mathbf R$.

\begin{lemma}\label{le.Geo}
Assume that  {\rm \textbf{[A$_{x_0}$]}}  and {\rm \textbf{[A$_{\infty}$]}} are satisfied. Then:
\begin{enumerate}
\item[{\rm\textbf{[a]}}] The function $f$ admits for sole global minimum point  $x_0$ in $\mathbf R^d$ and thus~in~$\overline \Omega$. 
\item[{\rm\textbf{[b]}}] The set  $\mathscr{C}_{{\rm min}}$ is equal to $\{ f<\min_{\partial \Omega}f \}$; it   contains $x_{0}$ and is  connected. In addition,   $\pa \mathscr{C}_{{\rm min}}\cap \partial \Omega \neq \emptyset$. 
\end{enumerate}
\end{lemma}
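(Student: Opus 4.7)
The plan is to split the lemma into its two logically independent parts and handle \textbf{[a]} by a direct coercivity argument, then \textbf{[b]} in three steps: identify $\{f<\mu\}$ with $\mathscr{C}_{{\rm min}}$, where $\mu:=\min_{\partial\Omega}f$, show it is connected, and reach $\partial\Omega$ through a well-chosen minimizer.

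For \textbf{[a]}, I would combine the coercivity \eqref{eq.coerive} of $f$ (recalled in Section \ref{sec.As}) with the fact, coming from \textbf{[A$_{\infty}$]}, that $f$ is bounded from below. Coercivity and continuity guarantee that the infimum of $f$ on $\mathbf R^d$ is attained; any point attaining it is a critical point; and by \textbf{[A$_{x_0}$]} the sole critical point is $x_0$. Hence $x_0$ is the unique global minimizer on $\mathbf R^d$, and in particular on $\overline{\Omega}$.

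For \textbf{[b]}, I plan to first establish connectedness of $\{f<\mu\}$ via the flow $\varphi_t$ of $b$. The orthogonal decomposition \textbf{[A$\perp$]} gives $b\cdot\nabla f=-|\nabla f|^2$, so $f\circ\varphi_t$ is non-increasing along any trajectory; combined with the property $\omega(x)=\{x_0\}$ recalled in Section \ref{sec.As}, any $y$ with $f(y)<\mu$ is joined to $x_0$ by the curve $\{\varphi_t(y):t\in[0,+\infty)\}\cup\{x_0\}$, which stays entirely in $\{f<\mu\}$. This simultaneously yields that $\{f<\mu\}$ is nonempty (it contains $x_0$ by \textbf{[a]}) and connected. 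Since $\{f<\mu\}$ does not meet $\partial\Omega$ by the very definition of $\mu$, it splits into its intersections with $\Omega$ and with $\mathbf R^d\setminus\overline{\Omega}$; connectedness together with $x_0\in\Omega$ forces $\{f<\mu\}\subseteq\Omega$, which gives $\mathscr{C}_{{\rm min}}=\{f<\mu\}$ and settles the first two assertions of \textbf{[b]}.

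The step I expect to be the main obstacle is $\partial\mathscr{C}_{{\rm min}}\cap\partial\Omega\neq\emptyset$. Here I would pick $z_0\in\partial\Omega$ realizing $f(z_0)=\mu$, which exists by compactness of $\partial\Omega$. Since $x_0$ is the only critical point, $\nabla f(z_0)\neq 0$; and since $z_0$ minimizes $f|_{\partial\Omega}$, the Lagrange multiplier rule yields $\nabla f(z_0)=\lambda\, n_\Omega(z_0)$ for some $\lambda\neq 0$. The delicate point is to exclude $\lambda<0$: were it the case, a first-order expansion in the outward normal direction would place points slightly outside $\overline{\Omega}$ into $\{f<\mu\}$, contradicting the inclusion $\{f<\mu\}\subseteq\Omega$ obtained above. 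Therefore $\lambda>0$, and a first-order expansion in the inward normal direction produces a sequence $z_0-t_n\, n_\Omega(z_0)\in\mathscr{C}_{{\rm min}}$ converging to $z_0$; since $f(z_0)=\mu$ prevents $z_0$ from belonging to $\mathscr{C}_{{\rm min}}$, this gives $z_0\in\partial\mathscr{C}_{{\rm min}}\cap\partial\Omega$.
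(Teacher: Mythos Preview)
Your proof is correct, but both parts of \textbf{[b]} take a different route from the paper's.

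For connectedness, the paper argues topologically: $\mathscr{C}_{\min}$ is nonempty, open, and closed in $\{f<\mu\}$, and the latter is connected because every sublevel set $\{f<\lambda\}$, $\lambda>f(x_{0})$, is connected (each connected component is bounded and must contain a local minimum of $f$, hence $x_0$). You instead argue dynamically, connecting each point of $\{f<\mu\}$ to $x_0$ by a flow trajectory. One small caveat: you invoke the flow $\varphi_t$ of $b$, the relation $b\cdot\nabla f=-|\nabla f|^2$, and the fact $\omega(x)=\{x_{0}\}$ from Section~\ref{sec.As}, all of which rely on \textbf{[A$\perp$]}, whereas the lemma only assumes \textbf{[A$_{x_0}$]} and \textbf{[A$_\infty$]}. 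This is harmless in the paper's setting since \textbf{[A$\perp$]} is a standing assumption, but to match the lemma's stated hypotheses you could simply replace $\varphi_t$ by the gradient flow of $-\nabla f$; the argument is then identical and self-contained.

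For $\partial\mathscr{C}_{\min}\cap\partial\Omega\neq\emptyset$, the paper observes that since $f$ has no local minimum on the level set $\{f=\mu\}$, every point of $\{f=\mu\}$ lies in $\partial\{f<\mu\}=\partial\mathscr{C}_{\min}$, and this level set meets $\partial\Omega$ by the very definition of $\mu$. Your Lagrange-multiplier argument is more explicit and has the side benefit of singling out a specific boundary point $z_{0}$ together with the sign $\partial_{n_\Omega}f(z_0)>0$, a fact the paper uses later anyway. The paper's route is shorter; yours is more constructive.
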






\begin{proof} The proof is elementary. 
Recall first that thanks to \eqref{eq.coerive} following from {\rm \textbf{[A$_{\infty}$]}}, $f$ admits a global minimum on $\R^{d}$, and using {\rm \textbf{[A$_{x_0}$]}}, 
it has for only global
minimum point $x_0$. This implies Item \textbf{[a]}. 

Notice moreover that $\mathscr{C}_{{\rm min}}$ contains $x_{0}$ and satisfies $\mathscr{C}_{{\rm min}}=\overline\Omega \cap \{f<\min_{\partial \Omega}f  \}$. It is thus nonempty and both open and closed in $\{ f<\min_{\partial \Omega}f \}$. 
Note also that  for any $\lambda >f(x_0)$, the nonempty (and relatively compact) open  set  $\{ f<\lambda\}$
is connected. 
In particular, the set $\{f<\min_{\partial \Omega}f  \}$ is connected, which implies that 
$$\mathscr{C}_{{\rm min}}=\{f<\min_{\partial \Omega}f  \}\quad\text{is connected}.$$
Finally, since $f:\R^{d}\to\R$ does not have any local minimum on  $\{ f=\min_{\partial \Omega}f \}$, it follows that 
$$\partial \mathscr{C}_{{\rm min}}\ =\ \{ f=\min_{\partial \Omega}f \},$$
which implies that $\partial \mathscr{C}_{{\rm min}}\cap \partial\Omega= \{ f=\min_{\partial \Omega}f \}\cap \partial\Omega \neq\emptyset$.
\end{proof}



   \subsubsection{Set of generalized saddle points of $f$}
 When  {\rm \textbf{[A$_{x_0}$]}}  and {\rm \textbf{[A$_{\infty}$]}} are satisfied, we  define  the (nonempty) set 
\begin{equation}\label{eq.PP}
\mathscr  P_{\rm sp}:= \pa \mathscr{C}_{{\rm min}}\cap \partial \Omega .
\end{equation}
Note  that every $z\in \mathscr  P_{\rm sp}$ is a global minimum of $f|_{\partial \Omega}$. A point  $z\in \mathscr  P_{\rm sp}$ is a  so-called  generalized saddle point. This is due to the fact that   
$\partial_{n_\Omega} f(z)>0$ and, for that reason, when the potential function $f$ is extended by $-\infty$ outside $\overline \Omega$ (notice that this  extension is the one which is compatible with the absorbing boundary condition on $\partial \Omega$),  the point $z$ is  geometrically a first-order saddle point of $f$.  
These points have  a crucial role in the asymptotic equivalents of the mean exit time from~$\Omega$, see indeed Theorem \ref{th.1}.  

Our last assumption is the following
$$\textbf{[A$_{\mathscr  P_{\rm sp}}$]}  \text{ For all $z\in  \mathscr  P_{\rm sp}$,  $\det\Hess(f|_{\partial \Omega})(z)\neq 0$.} 
$$
Observe that when  \textbf{[A$_{\mathscr  P_{\rm sp}}$]}  holds, 
the set $\mathscr  P_{\rm sp}$ has a finite number of elements.  
We say that Assumption \textbf{[A]} holds when all the  four assumptions above are satisfied.

\subsection{Main result} In this section, we state our main result which is Theorem \ref{th.1}.  We first  recall some notation. 
 For every $x\in \Omega$, on sets  $t_x:=\inf  \{t\ge 0, \ \varphi_{t}(x)\notin \Omega\}>0$ the first time the curve $\varphi_{t}(x)$ exits $\Omega$ and we define the  domain of attraction of a subset $F$ of $\Omega$  by 
$$
  \mathscr A_\Omega(F):=\big \{ x\in \Omega, \,t_x=+\infty \text{ and } \omega(x)\subset F \big \}.
$$ 
Note that  when  {\rm \textbf{[A$\perp$]}}, {\rm \textbf{[A$_{x_0}$]}}, and {\rm \textbf{[A$_{\infty}$]}} hold,  $
\mathscr{C}_{{\rm min}}\subset   \mathscr A_\Omega( \{x_0\})$. 

Throughout the paper, we shall say that 
a family of scalar $(a^h)_{h\in]0,1]}$ admits a $h$-classical expansion, if there exists a sequence $(a_n)_{n\in\N}$ such that for all $N\in\N$,     $a^h=\sum_{n=0}^N a_n h^n+O(h^{N+1})$.  Such an expansion  will be  denoted by   $a^h\sim\sum_{n\geq 0}a_nh^n$.
More generally, we shall say that a family of smooth functions  $(u^h)_{h\in]0,1]}$ on an open set $\Omega$
of $\R^{d}$ or of $\mathbf R_-^d:=\mathbf R^{d-1}\times \mathbf R_-$ admits a $h$-classical expansion on $\Omega$, if there exists a sequence $(u_n)_{n\in\N}$
of smooth functions on $\Omega$  such that for all compact $K\subset \Omega$, for all $k\in\N$, and for all $N\in\N$, $u^h=\sum_{n=0}^N u_n h^n+O(h^{N+1})$ in the $C^{k}(K)$ topology. This expansion is also  denoted $u^h\sim\sum_{n\geq 0}u_nh^n$ and when $u_n\equiv0$ for all $n\in\N$, we write $u^{h}=O(h^{\infty})$.

\begin{theorem}\label{th.1}
Assume that   {\rm \textbf{[A]}} holds. Then for every compact subset $K\in \mathscr A_{\Omega}(x_0)$, it holds uniformly in $x\in K$,
\begin{equation}\label{eq.EK}
\mathbf E_x[\tau_\Omega]= \kappa(h)\sqrt{ h} \, e^{\frac 2h (\min_{\pa \Omega}f -f(x_0))},
\end{equation}
where $\kappa(h)$ admits a $h$-classical expansion $\kappa(h)\sim \sum_{j\geq 0}\kappa_j h^j$ and 
\begin{equation}\label{eq.EK-PF}
\frac 1{\kappa_0}=\displaystyle{\frac {\sqrt{ \det\Hess f(x_0)} }{\sqrt \pi}  \sum_{\substack{z\in\mathscr  P_{\rm sp}}}  \,   \frac{\partial_{ n_\Omega}f(z) }{\sqrt{  \det\Hess f_{\vert\partial\Omega}(z)    } }\exp\Big[\int_ 0^{+\infty} \div (\ell)(\psi_t(z))dt\Big]},
\end{equation}
where $t\ge 0\mapsto \psi_t(x)$ is 
the   (global)  solution on $\mathbf R^d$ to 
 \begin{equation}\label{eq.solition}
\frac{d}{dt}\psi_{t}(x)  = - ( \nabla f- \ell )(\psi_{t}(x)), \  \psi_{0}(x)=x.
\end{equation} 
\end{theorem}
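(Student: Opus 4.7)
The plan is to reduce the asymptotic study of $\mathbf E_x[\tau_\Omega]$ to computing the principal Dirichlet eigenvalue of the infinitesimal generator, then to construct refined quasi-modes whose Laplace-type analysis at the generalized saddles produces the pre-factor~\eqref{eq.EK-PF}. Set $L_h := \tfrac{h}{2}\Delta + b\cdot\nabla$. The function $w_h(x):=\mathbf E_x[\tau_\Omega]$ solves $L_h w_h = -1$ in $\Omega$ with $w_h|_{\partial\Omega}=0$. I would first show, using maximal accretivity together with the spectral smallness of $L_h^{\rm D}$ (Dirichlet realization on $\Omega$), that $-L_h^{\rm D}$ admits a unique positive principal eigenvalue $\lambda_h$, exponentially small as $h\to 0$, while the rest of its spectrum stays at distance at least some $c>0$ from the origin. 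Writing $u_h$ and $v_h$ for the associated positive eigenfunctions of $-L_h^{\rm D}$ and $-(L_h^{\rm D})^*$, normalised so that $\langle u_h,v_h\rangle=1$, the resolvent decomposition yields, uniformly for $x$ in compact subsets of $\mathscr A_\Omega(x_0)$,
\begin{equation*}
\mathbf E_x[\tau_\Omega]\ =\ \lambda_h^{-1}\, u_h(x)\!\int_\Omega v_h\,dy\,\bigl(1+O(e^{-c/h})\bigr).
\end{equation*}
Since $u_h$ and $v_h$ concentrate at $x_0$ and $u_h$ will be shown to be essentially constant on $\mathscr A_\Omega(x_0)$, this reduces the whole problem to producing a classical expansion of $\lambda_h$.

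The core step is the construction of precise quasi-modes. For $-(L_h^{\rm D})^*$, the natural candidate is the invariant density $p_h$ of the full process on $\mathbf R^d$ restricted to $\Omega$ with a cutoff supported in a neighbourhood of $\mathscr C_{\rm min}$. Since $p_h$ is not explicit outside the Gibbsian case, I would develop a WKB expansion $p_h\sim e^{-2f/h}\sum_{j\ge 0}h^j a_j$ on $\mathscr A_\Omega(x_0)$. Plugging into $L_h^* p_h=0$ and using $\ell\cdot\nabla f=0$, the eikonal equation is solved by $V=f$, and the leading transport equation reads $(-\nabla f+\ell)\cdot\nabla a_0 + (\operatorname{div}\ell)\,a_0=0$. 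Integrating along the flow $\psi_t$ of~\eqref{eq.solition}, which converges to $x_0$ for every initial data in $\mathscr A_\Omega(x_0)$ because $\frac{d}{dt}f\circ\psi_t=-|\nabla f|^2\le 0$, produces
\begin{equation*}
a_0(x)\ =\ a_0(x_0)\,\exp\!\Bigl[\int_0^{+\infty}\!\operatorname{div}\ell(\psi_t(x))\,dt\Bigr],
\end{equation*}
the integral being convergent since $\ell(x_0)=0$ and $\operatorname{tr}\operatorname{Jac}\ell(x_0)=0$. Higher $a_j$ are obtained by the same transport scheme, and this is the step that injects the non-Gibbsian correction into the final formula. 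For $-L_h^{\rm D}$, I would build a quasi-mode that is $\equiv 1$ on $\mathscr C_{\rm min}$ and transitions to $0$ across $\partial\Omega$ with a boundary layer concentrated near each $z\in\mathscr P_{\rm sp}$, refined one order beyond \cite[Sec.~5.2]{peutrecCMP} because the non-trivial amplitude of $\widetilde v_h$ contributes to the Laplace integral computing $\lambda_h$.

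The third step is to extract $\lambda_h$ from
\begin{equation*}
\lambda_h\ =\ -\frac{\langle L_h^{\rm D}\widetilde u_h,\widetilde v_h\rangle}{\langle\widetilde u_h,\widetilde v_h\rangle}+ O(e^{-(S+\varepsilon)/h}),\qquad S:=2(\min_{\partial\Omega}f-f(x_0)).
\end{equation*}
Integration by parts turns the numerator into a boundary integral over $\partial\Omega$ which, up to exponentially negligible remainders, is localised near the points of $\mathscr P_{\rm sp}$. A Laplace expansion at each $z$ in boundary-adapted coordinates, using the precomputed amplitude $a_0(z)$, yields a contribution proportional to $\partial_{n_\Omega}f(z)\,\exp\bigl[\int_0^\infty\operatorname{div}\ell(\psi_t(z))\,dt\bigr]/\sqrt{\det\operatorname{Hess}(f|_{\partial\Omega})(z)}$, while the denominator is evaluated by Laplace at $x_0$ and contributes $(\pi h)^{d/2}/\sqrt{\det\operatorname{Hess}f(x_0)}$ at leading order. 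Combining these asymptotics produces~\eqref{eq.EK-PF}, and carrying the expansions to arbitrary order gives the full classical expansion $\kappa(h)\sim\sum_{j\ge 0}\kappa_j h^j$.

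The main obstacle will be the construction and analysis of the refined quasi-mode $\widetilde v_h$ approximating the non-explicit invariant density $p_h$ to $\mathcal C^1$-accuracy in a fixed neighbourhood of the generalized saddles. Unlike the Gibbsian case in which $p_h\propto e^{-2f/h}$ is explicit, here the leading amplitude at $z\in\mathscr P_{\rm sp}$ involves a non-local integral over the entire infinite-time trajectory $\psi_t(z)$, and one must establish the convergence of this integral, the smoothness of $a_0$ up to $\partial\Omega$, and the well-posedness of every higher transport equation uniformly near $\mathscr P_{\rm sp}$. Only once this refined amplitude is under quantitative control can the boundary Laplace integration at $z$ produce the sharp pre-factor~\eqref{eq.EK-PF}; and, because this amplitude can be larger or smaller than $1$, this is exactly what accounts for the possible acceleration or deceleration of the exit relative to the Gibbsian regime.
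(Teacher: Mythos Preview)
Your outline is sound and correctly isolates where the non-Gibbsian correction enters (the leading amplitude $a_0$ of the invariant density along the flow $\psi_t$), but the route differs from the paper's in a structurally important way. The paper does \emph{not} build a pair of quasi-modes in flat $L^2$. Instead it works in the weighted space $L^2(\Omega,p_h\,dx)$, where $p_h$ is the \emph{exact} invariant density on $\mathbf R^d$, and exploits the identity
\[
\langle L_h u,u\rangle_{L^2(p_h dx)}\ =\ \frac h2\int_\Omega |\nabla u|^2\,p_h\,dx
\]
for real $u$ vanishing on $\partial\Omega$. This makes the relevant quadratic form symmetric and reduces the eigenvalue computation to a \emph{single} quasi-mode $u_h^{\rm app}$; the non-Gibbsian factor then enters through the measure $p_h$, whose full classical expansion $p_h=h^{-d/2}e^{-2(f-f(x_0))/h}\sum_{k\ge0}h^kR_k$ (with $R_0$ exactly your $a_0$) is obtained by sharpening the Sheu--Mikami result to control all derivatives. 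Your WKB construction of $\widetilde v_h$ is essentially this same expansion, but using it as an adjoint quasi-mode is awkward because $p_h$ does not vanish on $\partial\Omega$ and hence lies outside the domain of $(L_h^{\rm D})^*$; the weighted-space trick sidesteps this. A second point: ``refined one order beyond \cite[Sec.~5.2]{peutrecCMP}'' would yield $\kappa_0$ but not the full classical expansion claimed in the theorem. The paper constructs $u_h^{\rm app}$ near each $z\in\mathscr P_{\rm sp}$ from a phase $\xi^z\sim\sum_{j\ge0}h^j\xi^z_j$ where $\xi^z_0$ solves an eikonal equation and the $\xi^z_j$, $j\ge1$, an infinite transport hierarchy, so that $L_h u_h^{\rm app}=O(h^\infty)$ times the exponential weight; this precision is what drives the all-order expansion of $\lambda_h$ via the spectral projector estimate. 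Your dual-quasi-mode approach can be made to work, but would require the same all-order construction for $\widetilde u_h$ and a careful justification of the error term $O(e^{-(S+\varepsilon)/h})$ from operator bounds on both $L_h\widetilde u_h$ and $L_h^*\widetilde v_h$.
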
 
Let us make two comments on this theorem.
\begin{enumerate}
\item[$\mathsf 1$.] 
Given a domain $\Omega$,  the  mean exit time $\mathbf E[\tau_\Omega]$  only depends on the killed process $(X_t,0\le t<\tau_\Omega)$ and thus only  on the values of $b$ in $\overline \Omega$ (roughly speaking, it   does not depend on the non local assumptions in {\rm \textbf{[A]}}). For that reason, the  Eyring-Kramers formula we derive in Theorem \ref{th.1} above does not depend on the values of $b$ outside $\overline \Omega$. 

\item[$\mathsf 2$.] It appears that for each $z\in\mathscr  P_{\rm sp}$, the term $\int_ 0^{+\infty} \div (\ell)(\psi_t(z))$, which in some sense
measures the non-Gibbsianness of the process when it is not~$0$, can be positive or negative.
Hence, compared with the reversible case $\ell=0$ or with the Gibbsian case $\div\ell=0$~\cite{peutrecCMP},  both can occur, acceleration or deceleration of the (mean) exit time from $\Omega$. 

To observe that both situations can occur, consider, in $\R^{2}$, the open disc
$\Omega=D((0,-1),2)$  of center $(0,-1)$ and radius $2$ and  the fields $b_{\pm}=-(\nabla f +\ell_{\pm})$, where $f(x)=\frac12|x|^{2}$ on $\R^{2}$ and $\ell_{\pm}(x)=\pm (x_{1}x_{2},-x^{2}_{1})$ on $\overline\Omega$.
In this setting, the assumption {\rm \textbf{[A]}} is satisfied with $x_{0}=0$ (once $\ell_{\pm}$ have been conveniently defined outside~$\overline\Omega$),  $\mathscr{C}_{{\rm min}}=D(0,1)$, and $\mathscr  P_{\rm sp}= \pa \mathscr{C}_{{\rm min}}\cap \partial \Omega=\{z\}$, where $z=(0,1)$.
In addition, we have  $\psi_t(z)=(0,e^{-t})$ for all $t\ge 0$ and $\div\ell_{\pm}(x)=\pm x_{2}$ for all $x\in \overline\Omega$.
It follows that $\int_ 0^{+\infty} \div (\ell_{\pm})(\psi_t(z))=\pm 1$ can be negative or positive. 
\end{enumerate}


\subsection{Related results}
\label{sec.lib}
In the boundary case, the first works on the mean exit time were probably those of Freidlin–Wentzell, where  in the one well setting, and when  $h\to 0$, the limit of $h\ln \mathbf E_x[\tau_{\Omega}]$   has been derived in~\cite[Chapter 4]{FrWe}  when $b\cdot n _{\Omega}<0$ over $\partial \Omega$. In this setting, it has   also been proved in~\cite{day-83} that   $ \mathbb E_x[\tau_{\Omega}]\lambda_{h}=1+o(1)$ when $h\to 0$, where $\lambda_h$ is the principal eigenvalue of (minus) the generator $L_h$ of \eqref{eq.langevin}, see Section \ref{sec.LL} below. Closely related results have also been derived in~\cite{DeFr,nectoux2017sharp} still  in this setting.   We also  refer to~\cite{MaSc,schuss2009theory} where asymptotic formulas for $\mathbf E[\tau_{\Omega}]$ when $h \to 0$ have been  obtained  through formal computations in different geometrical settings. 
 A comprehensive review of the literature until the 2010s on this topic can be found in~\cite{Ber}. 
Moreover, in the case when   $b\cdot n _{\Omega}\neq 0$  on $\partial \Omega$,   the asymptotic behavior of the solution to the parabolic equation $\partial_t u= L_{h}u $ has been studied in  \cite{ishii-souganidis-1,ishii-souganidis-2} (where the quasilinear case is also treated).   In the reversible case (i.e. when $\ell=0$), Eyring-Kramers type formulas have been derived   in~\cite{NectouxCPDE} when $f$ has critical saddle points on $\partial \Omega$, see also~\cite{mathieu1995spectra,sugiura2001,helffer-nier-06,LeNi,di-gesu-le-peutrec-lelievre-nectoux-16,DoNe2,lelievre2022eyring} for related results in the reversible case. 

Let us also mention that asymptotic estimates on eigenvalues and  on mean transition times in   the boundary less case have been widely studied. 
When the process is   reversible, we refer to~\cite{BGK,BEGK,Berglund_Gentz_MPRF,BD15,galves-olivieri-vares-87,miclo-95,avelin}. In the non-reversible case and  when the process \eqref{eq.langevin} is Gibbsian  (i.e. when $\div(\ell) =0$), sharp equivalent of mean transition times   have been derived in~\cite{seoARMA,lee2022non,LandimDrift2} (see also~\cite{landim2024metastability,lee2025eyring}), and   sharp asymptotic  formulas for the smallest eigenvalues of $L_h$ have been obtained in~\cite{peutrec2019sharp}  (see also~\cite{HeHiSj,BoLePMi22,bony2024real}).   Finally, as already mentioned,   the generalization  of  the Eyring–Kramers formula for mean transition times  to non Gibbsian  diffusion processes have been derived in~\cite{BoRe16} with formal calculations.

\section{Asymptotic behavior of the stationary distribution}



  \subsection{Asymptotic expansion of the stationary distribution}

In this section, we recall  and  improve a result of Sheu \cite{sheu1986} and Mikami~\cite{Mik90} (see also ~\cite{day1987r}) on the  properties of the stationary measure of \eqref{eq.langevin}, see Theorem \ref{th.sheu}.

For a measure $\nu$ over a subset $\mathscr M$ of $\mathbf R^d$, the set $H^k(\mathscr M,\nu(dx))$ stands for the usual (weighted) Sobolev space  of regularity $k\ge 0$ over $\mathscr M$ for the  measure $\nu(dx)$.  
 The infinitesimal generator of the diffusion \eqref{eq.langevin} is $\frac h2\Delta+b \cdot \nabla$ and will rather work  with minus this operator, namely with 
\begin{equation}\label{eq.Lh}
L_h=-\frac h2\Delta+ (\nabla f+\ell)\cdot \nabla.
\end{equation}
 The formal adjoint of $L_h$ in $L^2(\Omega, dx)$ is denoted by  $L_h^\star$. It is the operator acting on smooth function $u:\mathbf R^d\to \mathbf R$ as  
\begin{equation}\label{eq.Lstar}
L_h^\star u=-\frac h2 \Delta u + {\rm div}\, (b \, u  ).
 \end{equation}
 
For all $h>0$ fixed, the existence of  an invariant probability measure $\mu_h$ for the process \eqref{eq.langevin} follows from \cite[Lemma 1.2]{sheu1986} (note that according to the discussion in Section \ref{sec.As},  {\rm \textbf{[A$\perp$]}}, {\rm \textbf{[A$_{x_0}$]}}, and {\rm \textbf{[A$_{\infty}$]}} are  simply    a shorten formulation  Assumption (A) in~\cite{sheu1986}). 
It is well-known that since the vector field $b$ is smooth,   $\mu_h$ is the unique   invariant probability measure. This  follows from the fact that the process \eqref{eq.langevin}  is topologically irreducible and strongly Feller~\cite[Theorem 1.1]{peszat1995strong}. Moreover, $\mu_h$  has  a smooth density $p_h: \mathbf R^d\to \mathbf R_+$ w.r.t. the Lebesgue measure $dx$, see e.g. \cite{nualart2006malliavin,bogachev2022fokker},  which is positive over $\mathbf R^d$.  


\begin{theorem} \label{th.sheu}
 Assume   {\rm \textbf{[A$\perp$]}}, {\rm \textbf{[A$_{x_0}$]}}, and {\rm \textbf{[A$_{\infty}$]}}. 
Then, the positive function
$R^h$ defined by
\begin{equation}\label{eq.Rhh}
R^h: x\in \mathbf R^d\mapsto p_h(x)h^{d/2}e^{\frac 2h (f(x)-f(x_0))}
\end{equation}
 admits a real-valued $h$-classical expansion on $\R^{d}$. 
More precisely, 
we have 
$R^{h}\sim \sum_{k\geq0}h^kR_k$ for a sequence of real-valued functions $(R_k)_{k\in \N}\subset  C^\infty(\mathbf R^d)$
which satisfy
\begin{equation}\label{eq.jacobi}
-( \nabla f-\ell )\cdot \nabla R_0 + R_0 \div(\ell )=0, \, R_0(x_0)=c_0,
\end{equation} 
and 
for, any $k\geq 1$,
\begin{equation}\label{eq.jacobi2}
-( \nabla f-\ell )\cdot \nabla R_k + R_k \div(\ell ) =-\frac 12\Delta R_{k-1},
\end{equation} 
where $c_0>0$ is defined by  $c_0 \int_{\mathbf R^d}e^{-x\cdot \Hess f(x_0)x}dx=1$, that is by
\be\label{eq:calc0}
c_0=|\det \Hess f(x_0)|^{\frac 12}\pi^{-\frac d2}.
\ee
\end{theorem}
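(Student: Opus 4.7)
The plan is to treat the stationary Fokker--Planck equation $L_h^\star p_h = 0$ as a transport equation for the prefactor $R^h$ via a WKB ansatz. Substituting $p_h(x) = h^{-d/2} e^{-\frac{2}{h}(f(x)-f(x_0))} R^h(x)$ into \eqref{eq.Lstar} and expanding, the $h^{-1}$-terms carry a factor $\ell\cdot\nabla f$ and hence cancel thanks to \textbf{[A$\perp$]}; after factoring out the Gaussian prefactor, the stationary equation becomes the single equation
\begin{equation*}
-(\nabla f-\ell)\cdot\nabla R^h + R^h\operatorname{div}(\ell) = -\tfrac{h}{2}\Delta R^h.
\end{equation*}
Inserting the formal series $R^h \sim \sum_{k\ge 0} h^k R_k$ and identifying powers of $h$ produces exactly \eqref{eq.jacobi} at order $0$ and \eqref{eq.jacobi2} at each subsequent order.

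Next, I would solve these transport equations by integration along the flow $\psi_t$ of $-(\nabla f - \ell)$. Under \textbf{[A]}, $f$ is a strict Lyapunov function for $\psi_t$ since $\frac{d}{dt}f(\psi_t(x))=-|\nabla f(\psi_t(x))|^2$, and the linearization $-\Hess f(x_0)+\Jac\ell(x_0)$ has eigenvalues with strictly negative real parts by the same argument used in Section~\ref{sec.As} for $\varphi_t$, applied to $-\ell$ (for which $(-\ell)\cdot\nabla f = 0$ still holds). Therefore $\psi_t(x)\to x_0$ exponentially fast, uniformly on compacts; combined with $\operatorname{div}(\ell)(x_0) = 0$, the function $E(x):=\exp\bigl[\int_0^{+\infty}\operatorname{div}(\ell)(\psi_s(x))ds\bigr]$ is well defined and smooth on $\R^d$. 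Equation \eqref{eq.jacobi} is then uniquely solved by $R_0 = c_0 E$, and for $k\ge 1$ the recursion
\begin{equation*}
R_k(x) = R_k(x_0)\, E(x) + \tfrac{1}{2}\int_0^{+\infty}\Delta R_{k-1}(\psi_t(x))\exp\Bigl[\int_0^t\operatorname{div}(\ell)(\psi_s(x))\,ds\Bigr] dt
\end{equation*}
produces a smooth $R_k$ depending on one free constant $R_k(x_0)$.

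The constants $R_k(x_0)$ are then fixed by the normalization $\int_{\R^d}p_h = 1$. Setting $u_N^h:=h^{-d/2}e^{-\frac{2}{h}(f-f(x_0))}\sum_{k=0}^{N}h^k R_k$ and applying Laplace's method at the unique nondegenerate minimum $x_0$ of $f$ gives $\int u_N^h = \sum_{j=0}^{N}h^j I_j + O(h^{N+1})$, where each $I_j$ depends affinely on $R_j(x_0)$ with a nonzero Gaussian coefficient and otherwise only on the $R_{j'}$ with $j' < j$ and their derivatives at $x_0$. The condition $I_0 = 1$ gives precisely \eqref{eq:calc0}, and the conditions $I_j = 0$ for $1\le j\le N$ then form a triangular system which uniquely determines each $R_j(x_0)$.

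The main obstacle is the remainder estimate: showing that the formal expansion really is the asymptotic expansion of $R^h$, namely $R^h - \sum_{k=0}^{N}h^k R_k = O(h^{N+1})$ in $C^\ell(K)$ for every compact $K\subset\R^d$ and every $\ell \in \N$. By construction $L_h^\star u_N^h$ equals $h^{N+1}h^{-d/2}e^{-\frac{2}{h}(f-f(x_0))}F_N$ with $F_N$ smooth and globally controlled thanks to \textbf{[A$_\infty$]}; setting $q_N^h := p_h/u_N^h$, which is well defined and positive for small $h$ since $R_0(x_0) = c_0>0$, a direct computation shows that $q_N^h$ satisfies an elliptic equation of the form $-\frac{h}{2}\Delta q_N^h + \bigl(\nabla f-\ell+O(h)\bigr)\cdot\nabla q_N^h = O(h^{N+1})$ with smooth and globally controlled source and drift. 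Combining this with the integral constraint $\int (q_N^h-1)\,u_N^h\,dx = O(h^{N+1})$ coming from the normalization, and with a quantitative resolvent estimate for $L_h$ in a suitable weighted space, exploiting that $0$ is a simple isolated eigenvalue of $L_h$ with the constant eigenfunction, yields $q_N^h = 1 + O(h^{N+1})$ locally uniformly. Standard elliptic (Schauder) regularity then upgrades this to the $C^\ell$-control required by the definition of an $h$-classical expansion.
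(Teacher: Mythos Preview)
Your formal construction (the WKB substitution, the transport hierarchy \eqref{eq.jacobi}--\eqref{eq.jacobi2}, the solution by integration along $\psi_t$, and the determination of the constants $R_k(x_0)$ via Laplace's method) is correct and in fact more explicit than what the paper writes, which simply invokes \cite{Mik90} for all of this.

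The divergence with the paper lies entirely in the remainder estimate, and here your outline has two soft spots.  First, the paper does not attempt to prove the $C^0$ bound $R^h-\sum_{k\le N}h^kR_k=O(h^{N+1})$ from scratch: it quotes \cite[Theorem~1.3]{Mik90} for it and focuses only on upgrading from $C^0$ to $C^\ell$.  Your proposed route to the $C^0$ bound --- pass to $q_N^h=p_h/u_N^h$ and invoke ``a quantitative resolvent estimate for $L_h$ in a suitable weighted space'' --- hides the actual difficulty in that one phrase.  Obtaining such an estimate uniformly in $h$ on the whole of $\R^d$ (a spectral-gap/Poincar\'e inequality for the generator with explicit $h$-dependence) is precisely the nontrivial content of the Sheu--Mikami analysis you are trying to reprove; it does not come for free from $0$ being a simple eigenvalue.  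Second, your final step (``Schauder regularity then upgrades this to the $C^\ell$-control'') is problematic because the elliptic operator is $-\tfrac{h}{2}\Delta+V\cdot\nabla$, whose Schauder constants blow up as $h\to0$; a direct application would lose powers of $h$ and not give the sharp $O(h^{N+1})$ in $C^\ell$.

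The paper's argument for the $C^0\!\to\!C^\ell$ upgrade is both different and more robust.  It works with the \emph{difference} $W_n^h:=R^h-\sum_{k\le n}h^kR_k$, which satisfies $(h\Delta-b^*\!\cdot\nabla+2\div\ell)\,W_n^h=-h^{n+1}\Delta R_n$.  Multiplying the $\alpha$-differentiated equation by $\chi^2\partial^\alpha W_n^h$ and integrating by parts yields, by induction on $|\alpha|$ starting from the known $|\alpha|=0$ case, the suboptimal bound $\|\chi\,\partial^\alpha W_n^h\|_{L^2}=O(h^{\,n+1-|\alpha|/2})$.  The sharp $O(h^{n+1})$ is then recovered by the simple trick $W_n^h=W_{n'}^h+\sum_{k=n+1}^{n'}h^kR_k$ with $n'=n+\lceil|\alpha|/2\rceil$.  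This avoids any appeal to resolvent bounds or to $h$-dependent elliptic regularity.
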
 

We recall that $R^{h}\sim \sum_{k\geq0}h^kR_k$ means that
for every $n\in\N$, for every compact set $K$ of $\mathbf R^d$, and for every $\alpha\in\N^d$,
the smooth function  $W^h_{n}:=R^h - \sum_{k=0}^nh^kR_k$ satisfies
\be\label{eq:estw2}
 \sup_{x\in K}|\partial^\alpha W^h_{n} (x)|\leq C_{K,\alpha} h^{n+1}
 \ee
when $h\to0$, for some constant $C_{K,\alpha}$ independent of $h$.
As we shall see in the proof, the case when $|\alpha|=0$ is already known \cite{sheu1986,Mik90}. To perform our analysis we need a stronger control on the derivatives of the $W^h_{n}$'s. This is the reason we   extend  it to the case    $|\alpha|>1$.

\begin{proof}
  From \cite[Lemma 1.1]{sheu1986}, one deduces that  $f=\frac 12 V$ where $V$ is the so-called Freidlin–Wentzell  quasi-potential defined by 
 \bes
 V(x)= \frac 12 \inf_{\phi(0)=x,\;\phi(+\infty)=x_0}\int_0^\infty|\dot\phi(s)+b(\phi(s))|^2ds.
 \ees
 In particular, the function  $V$ is $C^\infty$ on $\R^d$. In addition, Assumptions (A.2), (A.3), and (A.4.r) in \cite{Mik90} are satisfied. Hence, by  \cite[Theorem 1.3]{Mik90}, we get exactly    all the assertions of the theorem   except   the estimates 
 \eqref{eq:estw2} when $|\alpha|\geq 1$, which we  prove now (observe indeed  that \eqref{eq:estw2} with $|\alpha|=0$ is proved in \cite{Mik90}).

  In the following $C>0$ denotes a constant independent of  $h>0$ that may change from one occurence to another. The Sobolev space $H^k(\mathbf R^d, dx)$ is simply denoted by $H^k$ and its norm by $\Vert \cdot \Vert_{H^{k}}$. The scalar product in $L^2(\mathbf R^d, dx)$ is denoted by $\langle\cdot,\cdot \rangle _{L^2}$. 
For ease of notation, we set $b^*=2(\nabla f-\ell )$. Then, Equations  \eqref{eq.jacobi} and  \eqref{eq.jacobi2}
  rewrite 
 \bes
 b^*\cdot \nabla R_k-2\div(\ell )\, R_k-\Delta R_{k-1}=0
 \ees
for all $k\geq 0$, with the convention $R_{-1}=0$.  On the other hand, since $p_h$ is the invariant  density of the process \eqref{eq.langevin},  it follows that  $L_h^\star \, p_h=0$ (see \eqref{eq.Lstar}). Hence, we deduce that 
\bes
h\Delta R^h-b^*\cdot\nabla R^h+2\div(\ell ) R^h=0.
\ees
Combining these two identities, we get for every $n\in\N$,
\begin{equation}\label{eq:pfthsheu1}
(h\Delta -b^*\cdot\nabla + 2\div(\ell ))W_n^h=-h^{n+1}\Delta R_{n}.
\end{equation}
By Sobolev embedding, in order to prove \eqref{eq:estw2}, it is sufficient to prove that  for any $\chi\in C_c^\infty(\R^d)$ and any $\alpha\in\N^d$, there exists $C>0$, such that  
\be\label{toproveWnh}
\Vert \chi \partial^\alpha W_n^h\Vert_{L^2}\leq C\,  h^{n+1}.
\ee 
To prove \eqref{toproveWnh}, we first  show the following   \textit{a priori}  estimate:
\be\label{toproveWnhweak}
\forall \alpha\in\N^d, \forall \chi\in C_c^\infty(\R^d),\;\exists C>0,\;\Vert \chi \partial^\alpha W_n^h\Vert_{L^2}\leq Ch^{n+1-\frac {|\alpha|} 2}.
\ee
We prove this estimate by   induction on   $s=|\alpha|$. 
To lighten the notation, the function $\partial^\alpha W_n^h$ will be simply denoted by $w^\alpha$.   As  explained above, the case $s=0$ holds true since it is proved in~\cite{Mik90}.    Let us now assume that the estimate  \eqref{toproveWnhweak} is true for all $s'\le s$, for some $s\ge 0$. Let $\chi\in C_c^\infty(\R^d)$ be a real-valued function and let $\alpha\in\N^d$ be such that $|\alpha|=s$. 
Differentiating  $\alpha$ times Equation \eqref{eq:pfthsheu1}, $w^\alpha$ satisfies
\be\label{eq:pfthsheu1deriv}
(h\Delta -b^*\cdot\nabla)w^\alpha=-h^{n+1}\partial^\alpha\Delta R_{n}+\sum_{\beta, |\beta|\leq|\alpha|}a_{\alpha,\beta}w^\beta,
\ee
where the functions $a_{\alpha,\beta}$, which are   linear combinations of derivatives of $\div(\ell)$ and $b^*$,  are smooth functions over $\R^d$ and are  independent of $h$.
Since the $a_{\alpha,\beta}$ are bounded on $\supp(\chi)$,   using   the Cauchy–Schwarz inequality,  one deduces that
\bes
|\<(h\Delta-b^*\cdot\nabla)w^\alpha,\chi^2w^\alpha\>_{L^2}|\leq C(\sum_{\beta, |\beta|\leq|\alpha|}\Vert \chi w^\beta\Vert_{L^2}+h^{n+1}\Vert \chi\partial^\alpha \Delta R_n\Vert_{L^2})\Vert \chi w^\alpha\Vert_{L^2}.
\ees
Since all the derivatives of $\Delta R_n$ are bounded on any compact set and using the induction hypothesis, this implies 
\bes
|\<(h\Delta-b^*\cdot\nabla)w^\alpha,\chi^2 w^\alpha\>_{L^2}|\leq C h^{2n+2-s},
\ees
and hence
\be\label{eq:pfWnh1}
|\<h\Delta w^\alpha,\chi^2w^\alpha\>_{L^2}|\leq |\<b^*\cdot\nabla w^\alpha,\chi^2w^\alpha\>_{L^2}|+ Ch^{2n+2-s}.
\ee
On the other hand, integrating by parts, we get 
\bes
\begin{split}
\<b^*\cdot\nabla w^\alpha,\chi^2w^\alpha\>_{L^2}=&-\<b^*\cdot\nabla w^\alpha,\chi^2w^\alpha\>_{L^2}
-\<\div(b^* \chi^2) w^\alpha, w^\alpha\>_{L^2},
\end{split}\ees
so that $2\<b^*\cdot\nabla w^\alpha,\chi^2w^\alpha\>_{L^2}=-\<\div(b^* \chi^2) w^\alpha, w^\alpha\>_{L^2}$. Since $\div(b^* \chi^2) $ has a compact support, 
using again the induction hypothesis, it follows that
\bes
|\<b^*\cdot\nabla w^\alpha,\chi^2w^\alpha\>_{L^2}|\leq Ch^{2n+2-s}.
\ees
Combined with \eqref{eq:pfWnh1},  this finally yields
\be\label{eq:pfWnh2} 
|\<\Delta w^\alpha,\chi^2w^\alpha\>_{L^2}|\leq C h^{2n+2-s-1}.
\ee
On the other hand, by successive integration by parts, one gets
\bes
\begin{split}
-\<\Delta w^\alpha,\chi^2 w^\alpha\>_{L^2}&=\<\nabla w^\alpha,\chi\nabla\chi \, w^\alpha\>_{L^2}+\<\chi\nabla w^\alpha,\nabla(\chi w^\alpha)\>_{L^2}\\
&=\<\nabla w^\alpha,\chi\nabla\chi w^\alpha\>_{L^2}+\Vert \nabla(\chi w^\alpha)\Vert_{L^2}^2-\<\nabla\chi w^\alpha,\nabla(\chi w^\alpha)\>_{L^2}\\
&=\Vert \nabla(\chi w^\alpha)\Vert_{L^2}^2-\Vert \nabla(\chi) w^\alpha\Vert_{L^2}^2.
\end{split}
\ees
Combining this previous  identity with \eqref{eq:pfWnh2} and using once again the induction hypothesis, we deduce that
$
\Vert \nabla(\chi w^\alpha)\Vert_{L^2}^2=O(h^{2n+2-s-1})$. Therefore, it holds: 
\be\label{eq:pfWnh3}
\Vert  \chi  \nabla w^\alpha\Vert_{L^2}=O(h^{n+1-\frac{s+1}2}).
\ee
This  proves the estimate \eqref{toproveWnhweak} at rank $s+1$. Let us now improve this bound, and more precisely let us show   \eqref{toproveWnh}. We observe that for any $n\leq n'\in\N$, one has
\bes
\chi \partial^\alpha W_n^h=\chi \partial^\alpha W_{n'}^h+\sum_{k=n+1}^{n'} h^k\chi \partial^\alpha R_k.
\ees
Consequently, since the $R_k$ are $C^\infty$ and independent of $h$, for any $s\in\N$, one deduces from \eqref{toproveWnhweak} that
\bes
\Vert  \chi\partial^\alpha  W_n^h\Vert_{L^2}\leq \Vert  \chi \partial^\alpha W_{n'}^h\Vert_{L^2}+\sum_{k=n+1}^{n'} h^k\Vert \chi \partial^\alpha  R_k\Vert_{L^2}\leq C_s h^{n'+1-\frac{|\alpha|}2} +Ch^{n+1}
\ees
We finally obtain \eqref{toproveWnh} by taking $n'=n+\lceil \frac{|\alpha|}2\rceil$, which
completes the proof.  
\end{proof}


We now discuss the term $R_0$ which will play a crucial role in the Eyring-Kramers formula for the mean exit time. 
As already noticed in \cite[Theorem 3.1]{sheu1986} (see also \cite{day1987r}), it is standard to show that  $R_0$ actually admits the expression  \begin{equation}\label{eq.R0}
R_0(x)= c_0\exp\Big[\int_ 0^{+\infty} \div ( \ell ) (\psi_t(x))dt\Big], \ \forall x\in \mathbf R^d, 
\end{equation} 
 where we recall that~$t\mapsto \psi_{t}(x)$ is the global solution to  \eqref{eq.solition}. 
 Indeed,   using \eqref{eq.jacobi}, it holds
$$\frac{d}{dt} R_0(\psi_{t}(x))+ \div (\ell) (\psi_t(x)) R_0(\psi_{t}(x))=0, \forall t\ge 0.$$
Hence,   for $t\ge 0$, $R_0(\psi_{t}(x))=R_0(x)\exp[-\int_ 0^{t} \div( \ell )(\psi_s(x))ds ]$. The identity \eqref{eq.R0}  then follows taking the limit $t\to+\infty$. Note that  $ \div ( \ell )(\psi_s(x))\to 0$ exponentially fast as $s\to +\infty$  since   
 $\psi_s(x)\to x_0$  exponentially fast as $s\to +\infty$ ($x_{0}$ being non-degenerate), $\div(\ell )$ is globally Lipschitz,  and  $\div(\ell )(x_0)=0$.
  Note also that the formula \eqref{eq.R0} implies that $R_0(x)$  is positive over $\R^d$.
  



 \section{Spectral point of view} 
  
    In the two next sections we recall some results from  \cite{peutrecCMP} that will be used to prove our main result. We first note that since $p_h$ is smooth and positive over $\mathbf R^d$ and because  $\Omega$ is bounded, it holds for every $h>0$ and $k\ge 0$, 
 \begin{equation}\label{eq.Hk}
 H^k(\Omega,e^{-\frac 2h f}dx)=H^k(\Omega,dx)=H^k(\Omega,p_hdx),
  \end{equation}
 and all the involved norms are equivalent (with constants depending on $h>0$).  In the following, the norm (resp. the scalar product) of the space  $L^2(\Omega, \nu(dx))$ is denoted by $ \Vert \cdot \Vert_{L^2(\nu(dx))}$ (resp. $\langle \cdot,\cdot \rangle_{L^2(\nu(dx))}$) and 
 we write $H^k(\Omega,\nu(dx))$ to indicate that the ambient space under consideration is 
 the space $L^2(\Omega,\nu(dx))$.  
 
\subsection{Spectral results at fixed $h>0$}
 \label{sec.LL}
In \cite[Proposition 3]{peutrecCMP}, we proved that the operator 
$L_h$  with domain   $\mathscr D_1=H^2(\Omega, e^{-\frac 2h f}dx)\cap H^1_0(\Omega,e^{-\frac 2h f}dx)$  
  had the following spectral properties at $h>0$ fixed,  which are quite standard for an elliptic operator over a bounded domain:
 \begin{enumerate}
 \item[] {$\mathfrak 1$.}  The operator  $L_h|_{\mathscr D_1}$ is  maximal quasi-accretive and  has a compact resolvent. Its spectrum is thus discrete.  
 \item[]{$\mathfrak 2$.}  The operator  $L_h|_{\mathscr D_1}$   has a principal  eigenvalue $\lambda_h\in \mathbf R_+^*$, i.e.  $\lambda _h$ 
 has algebraic multiplicity one, and   $\Re \mu >\lambda_h$ for every $\mu \in \sigma(L_h)\setminus \{\lambda_h\}$.  
 
 In addition, any associated eigenfunction $u_h$ has a sign in $\Omega$.  The normalized positive one  is called the principal eigenfunction.
  \end{enumerate}
   
On the other hand, we also notice that for all real function $u\in \mathscr D^\infty_0:=\{g \in \mathcal C^\infty(\overline \Omega), g=0 \text{ on } \partial \Omega\}$,
 $$\int_\Omega L_h u^2(x)\, p_h(x)\, dx=0,$$
  stemming from the identity $L_h^\star\,  p_h=0$ together with the fact that $\nabla u^2\cdot n_\Omega=0$ on $\partial \Omega$. Consequently, we have for any such real function $u$ the identity 
  \begin{equation}\label{eq.quadra}
 \langle u,L_hu\rangle_{L^2(p_hdx)}=  \int_\Omega u(x)L_h u(x)\, p_h(x)\, dx= \frac h2 \int_\Omega |\nabla u(x)|^2 \, p_h(x)\, dx.
   \end{equation}
   As simple as it may seem, this formula is a key ingredient in our analysis. Moreover, the gradient structure \eqref{eq.quadra} strongly suggests to rather work with  the operator 
$L_h$  with domain $\mathscr D_2=H^2(\Omega, p_hdx)\cap H^1_0(\Omega, p_hdx)$.  In view of \eqref{eq.Hk}, one can also consider $L_h$ with domain  $\mathscr D_3=H^2(\Omega, dx)\cap H^1_0(\Omega,dx)$. 
It is actually  easy to see that the spectrum of   $L_h$  is the same on each domain  $\mathscr D_i$, $i=1,2,3$ (associated with their respective scalar product), as well as both   the algebraic and geometric   multiplicities   of an eigenvalue. 
In addition, Items $\mathfrak 1$   and $\mathfrak 2$ above are satisfied for  each $L_h|_{\mathscr D_i}$, $i=1,2,3$, with the same principal eigenvalue $\lambda_h$ and associated eigenspace ${\rm Span}(u_{h}) $.
 Note also that the identity \eqref{eq.quadra} extends by density  to every real $u\in \mathscr D_i$, $i=1,2,3$.

In the following, for all $h>0$, we   choose $u_h$ such that  
$$u_h>0 \text{ in } \Omega \text{ and } \int_\Omega |u_h|^2p_h=1.$$


     \subsection{Spectral results when  $h\to 0$}

In this section, we recall  the following result which will be the starting point of  the proof of Theorem \ref{th.1}.

 \begin{theorem}[\cite{peutrecCMP}]\label{th.CMP}
 Assume   {\rm \textbf{[A$\perp$]}}, {\rm \textbf{[A$_{x_0}$]}}, and {\rm \textbf{[A$_{\infty}$]}}. 
Then, there exists  $c_1>0 $  such that, for all $c_2\in (0,c_1)$, there exist $h_0>0$ and $C>0$ such that, for all 
$\mathsf  z\in \{\mathsf z\in \mathbb C, \Re \mathsf z \le c_1 , \vert \mathsf z\vert \ge c_3\}$ and $h\in (0,h_0]$, 
\begin{equation}\label{eq.info0}
L_{h}-\mathsf  z     \text{  is invertible and } \ \  \Vert (  L_h-\mathsf z)^{-1}\Vert_{L^2(p_hdx)}   \le C.
 \end{equation} 
In addition, there exist $c>0$  and $h_0>0$ such that  for all $h\in (0,h_0]$, 
\begin{equation}\label{eq.info}
\sigma(L_{h})\cap \big\{ \mathsf z\in \mathbb C, \Re \mathsf z\le c\big\}=\{\lambda_{h} \big\}\  \text{ and }\ 
 \lim_{h\to 0}h\ln \lambda_{h}= -2\,\big(\min_{\pa \Omega}f -f(x_0)\big).
 \end{equation} 
Moreover, for every compact subset  $K$  of   $\mathscr A( \{x_0\})$,  there exist $c>0$  and $h_0>0$ such that  for all $h\in (0,h_0]$ and $x\in K$,
\begin{equation}\label{eq.ex} 
 \lambda_h  \mathbf  E_x[\tau_{\Omega}]=  (1+ O( e^{-\frac{c}{h}}) )   \text{    uniformly in $x\in K$}.
 \end{equation}
  \end{theorem}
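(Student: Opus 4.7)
The approach rests on three ingredients: (i) the gradient identity \eqref{eq.quadra}, which makes $L_h$ in $L^2(p_h dx)$ behave at the level of quadratic forms like a self-adjoint operator; (ii) the semiclassical concentration of the invariant density $p_h$ near $x_0$ from Theorem \ref{th.sheu}; and (iii) the Dynkin identity $L_h w_h = 1$ in $\Omega$, $w_h = 0$ on $\partial \Omega$, where $w_h(x) := \E_x[\tau_\Omega]$, which relates the mean exit time to the resolvent of $L_h$ at $0$.

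For the resolvent bound \eqref{eq.info0}, polarizing \eqref{eq.quadra} gives, for real $u \in \mathscr D_2$,
\[
\Re \langle (L_h - \mathsf z)u, u\rangle_{L^2(p_h dx)} = \tfrac{h}{2}\, \Vert \nabla u\Vert_{L^2(p_h dx)}^2 - (\Re \mathsf z)\, \Vert u\Vert_{L^2(p_h dx)}^2,
\]
and the Dirichlet Poincaré inequality on the bounded set $\Omega$ then provides $h$-uniform coercivity of $L_h - \mathsf z$ when $\Re \mathsf z \le c_1$ for some small $c_1 > 0$; complex $u$ is handled by splitting into real and imaginary parts. Combined with the compactness of the resolvent (Item $\mathfrak 1$), this gives \eqref{eq.info0} away from the finitely many eigenvalues in the strip.

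To locate $\lambda_h$ and prove \eqref{eq.info}, I would build a quasi-mode $\chi \in \mathcal C^\infty_c(\Omega)$ equal to $1$ on $\{f \le \min_{\partial \Omega} f - \delta\}$; its Rayleigh quotient on $L^2(p_h dx)$ is at most $C e^{-\frac{2}{h}(\min_{\partial \Omega} f - f(x_0)) + o(1)/h}$ because $p_h \sim c_0 R_0 h^{-d/2} e^{-2f/h}$ concentrates at $x_0$ while $\nabla \chi$ is supported where $f \ge \min_{\partial\Omega} f - \delta$. The matching lower bound and the isolation of $\lambda_h$ in $\{\Re \mathsf z \le c\}$ would follow from Agmon-type exponential decay estimates: any approximate eigenfunction with eigenvalue $o(1)$ must be concentrated at $\{x_0\}$, using the non-degeneracy of $\Hess f(x_0)$ and the non-characteristic character of $\partial \Omega$ to exclude concentration elsewhere on $\overline \Omega$.

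Finally, to deduce \eqref{eq.ex}, the spectral decomposition forced by Steps 1--2, together with $L_h w_h = 1$, yields
\[
w_h = \lambda_h^{-1}\, \Pi_h\, 1 + (\Id - \Pi_h)\, L_h^{-1} (\Id - \Pi_h)\, 1,
\]
where $\Pi_h$ is the rank-one spectral projector onto $\Ker(L_h - \lambda_h)$ and the second term is $O(1)$ uniformly in $h$ by Step 1. Multiplying by $\lambda_h$, the estimate \eqref{eq.ex} reduces to checking $(\Pi_h 1)(x) = 1 + O(e^{-c/h})$ uniformly on any compact $K \subset \mathscr A(\{x_0\})$. Writing $(\Pi_h 1)(x) = u_h(x)\int_\Omega u_h^\star\, p_h\, dx$, where $u_h^\star$ is the principal eigenfunction of the adjoint of $L_h$ in $L^2(p_h dx)$ (normalized so $\langle u_h, u_h^\star\rangle_{L^2(p_h dx)} = 1$), the reduction is to $u_h(x) = 1 + O(e^{-c/h})$ on $K$ and a matching estimate for the $p_h$-weighted mass of $u_h^\star$. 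The main obstacle is the first of these: in the absence of symmetry, $u_h$ has no explicit expression, so one must propagate the normalization value $u_h(x_0) \sim 1$ (forced by $\int u_h^2 p_h = 1$ together with the concentration of $p_h$ at $x_0$) along trajectories of the drift \eqref{eq.flow} back to $K$, using the non-characteristic boundary assumption to ensure these trajectories remain inside $\Omega$ in finite time.
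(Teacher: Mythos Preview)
First, note that the paper does not prove this theorem from scratch: it cites \cite{peutrecCMP} (Theorems~1 and~4 there) and only adds a short remark explaining why the resolvent bound, stated in \cite{peutrecCMP} for $L^2(e^{-2f/h}dx)$, transfers to $L^2(p_hdx)$, using that $p_h\, e^{2f/h} = h^{-d/2}(R_0+o(1))$ with $R_0$ uniformly bounded above and away from zero on $\overline\Omega$.

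Your outline attempts an independent argument, and its first step has a genuine gap. You claim that the Dirichlet Poincar\'e inequality on $\Omega$ yields $h$-uniform coercivity of $L_h-\mathsf z$ in $L^2(p_hdx)$ for $\Re\mathsf z\le c_1$. But the Poincar\'e constant in $L^2(p_hdx)$ is \emph{not} uniform in $h$: with $u$ a cutoff equal to $1$ on $\{f\le \min_{\partial\Omega}f-\delta\}$ and supported in $\mathscr{C}_{{\rm min}}$, one has $\|u\|^2_{L^2(p_h)}\sim 1$ while $\frac{h}{2}\|\nabla u\|^2_{L^2(p_h)}\le C\, e^{-2(\min_{\partial\Omega}f-f(x_0)-\delta)/h}$, so the ratio is exponentially small. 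This is exactly the mechanism producing the exponentially small eigenvalue $\lambda_h$, and it rules out any global coercivity bound on $\mathscr D_2$. Obtaining \eqref{eq.info0} on the region $\{\Re\mathsf z\le c_1,\ |\mathsf z|\ge c_2\}$ genuinely requires isolating and removing the small eigendirection, which in this non-self-adjoint setting calls for a Grushin/Schur reduction or the hypocoercive machinery of \cite{peutrecCMP}, not a bare quadratic-form estimate. The later steps of your outline (quasi-mode upper bound, Agmon-type lower bound, and the Dynkin/leveling argument for \eqref{eq.ex}) are broadly in the right spirit, but they all rest on the resolvent bound from Step~1 and therefore do not stand on their own.
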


  \noindent
  \textbf{Note}. It has also been proved in \cite{peutrecCMP} that the law of $\lambda_h\tau_{\Omega}(x)$  
converges exponentially fast to the exponential law of mean $1$, uniformly in $x$ in the compact subsets of  $\mathscr A( \{x_0\})$.

  \begin{proof}
  All the statements of Theorem \ref{th.CMP} have been proved in~\cite{peutrecCMP}, see indeed  Theorems 1 and  4 there\footnote{  In the setting of~\cite{peutrecCMP},  $\Omega$ is subdomain of the $d$-dimensional torus. It is actually straightforward to see that the assertions~\cite[Theorems  1 and 4]{peutrecCMP} are indeed still valid under {\rm \textbf{[A$\perp$]}}, {\rm \textbf{[A$_{x_0}$]}}, and {\rm \textbf{[A$_{\infty}$]}.} }. 
  Let us mention that concerning the second estimate in \eqref{eq.info0}, we actually proved
   in~\cite{peutrecCMP}
   that for such complex numbers $\mathsf z$ and for all $h>0$ small enough, 
  $$\Vert (  L_h-\mathsf z)^{-1}\Vert_{L^2(e^{-\frac 2h f}dx)}   \le C,$$ 
  namely that for all $\phi \in L^2(\Omega, e^{-\frac 2h f}dx)$, $\int_\Omega  |(  L_h-\mathsf z)^{-1} \phi|^2 e^{-\frac 2h f} \le C^2\int_\Omega  | \phi|^2 e^{-\frac 2h f}$, or equivalently  (see \eqref{eq.Rhh} and Theorem \ref{th.sheu})
  $$ \int_\Omega  |(  L_h-\mathsf z)^{-1} \phi|^2 \frac{p_h}{R_0+o(1)} \le C^2\int_\Omega  | \phi|^2 \frac{p_h}{R_0+o(1)},$$
  where $o(1)$ is uniform over $\overline \Omega$. 
  The fact that $\Vert (  L_h-\mathsf z)^{-1}\Vert_{L^2(p_hdx)}=O(1)$ then follows noticing that over $\Omega$, there exists $r_1,r_2>0$ such that 
  $r_1\le R_0\le r_2$. 
  \end{proof}

  In the next section we state the spectral counterpart of Theorem \ref{th.1}.
  \subsection{Sharp asymptotics of the principal eigenvalue}

  The following theorem provides the sharp equivalent of $ \lambda_h$ in the limit $h\to 0$.

  \begin{theorem}\label{th:sharpeigenval}
 Assume that   {\rm \textbf{[A]}} holds. Then,  there exists $h_0>0$ and a sequence $(\zeta_j)_{j\in\N}$ of real numbers such that 
 for all $h\in]0,h_0]$, one has
 \be\label{eq:asymptvp}
 \lambda_h=h^{-\frac 12}\zeta(h)e^{-\frac 2h (\min_{\pa \Omega}f -f(x_0))},
 \ee
 where $\zeta(h)$ admits a $h$-classical expansion $\zeta(h)\sim\sum_{j\geq 0}\zeta_j h^j$
 with 
 \be\label{eq:defzeta0}
 \zeta_0=\displaystyle{\frac {\sqrt{ \det\Hess f(x_0)} }{\sqrt \pi}  \sum_{\substack{z\in\mathscr  P_{\rm sp}}}  \,   \frac{\partial_{ n_\Omega}f(z) }{\sqrt{  \det\Hess f_{\vert\partial\Omega}(z)    } }\exp\Big[\int_ 0^{+\infty} \div (\ell )(\psi_t(z))dt\Big]},
 \ee
and  where we recall that $t\ge 0\mapsto \psi_t(x)$ is the solution to \eqref{eq.solition}.
  \end{theorem}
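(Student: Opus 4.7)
The starting point is the identity
$$\lambda_h \;=\; \tfrac{h}{2}\int_\Omega |\nabla u_h|^2\, p_h\, dx,$$
obtained by applying \eqref{eq.quadra} to the positive principal eigenfunction $u_h$ normalized in $L^2(p_h dx)$. The plan is to construct a sharp quasi-mode $v_h$ for $u_h$ and to evaluate the right-hand side via Laplace's method. More precisely, I would build $v_h \in C^\infty(\overline\Omega)\cap \mathscr D_2$ with $v_h|_{\partial\Omega}=0$, $v_h \equiv 1 + O(h^\infty)$ outside a shrinking neighborhood of $\partial\Omega$, and a precise boundary layer of normal thickness $h$ near each saddle $z \in \mathscr{P}_{\rm sp}$. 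In local coordinates $(y',y_n)$ adapted to $\partial\Omega$ near $z$, with $\Omega=\{y_n<0\}$ locally, the dominant balance in $L_h v_h \approx 0$ in the normal variable is $-\tfrac{h}{2}\partial_{y_n}^2 V + \partial_{n_\Omega} f(z)\,\partial_{y_n} V = 0$, whose interior-matching solution is $V(y_n) = 1 - e^{2\partial_{n_\Omega} f(z)\, y_n/h}$. Correction amplitudes are then determined by a WKB expansion $v_h \sim \sum_k h^k a_k$, the $a_k$ being obtained by solving successive transport equations along the characteristics $\psi_t$ of $-\nabla f + \ell$, so that ultimately
$$L_h v_h \;=\; O(h^\infty)\, e^{-(2/h)(\min_{\partial\Omega}f - f(x_0))}\quad\text{in } L^2(p_h dx).$$

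The second step is spectral: using the resolvent bound \eqref{eq.info0} and the spectral gap \eqref{eq.info} of Theorem \ref{th.CMP}, the spectral projector onto $\Span(u_h)$ (written as a Cauchy integral around $\lambda_h$) maps $v_h$ to a function differing from $v_h$ by $O(h^\infty)$ in $L^2(p_h dx)$. Combining this with \eqref{eq.quadra} applied to $v_h$ itself yields
$$\lambda_h \;=\; \frac{\tfrac{h}{2}\int_\Omega |\nabla v_h|^2\, p_h\, dx}{\int_\Omega v_h^2\, p_h\, dx}\,\bigl(1 + O(h^\infty)\bigr).$$
Since $v_h \equiv 1 + O(h^\infty)$ on a neighborhood of $x_0$ where $p_h$ concentrates, and since $p_h$ is a probability density losing only $O(h^\infty)$ mass outside $\Omega$ (a consequence of Theorem \ref{th.sheu} and $x_0\in\Omega$), the denominator equals $1 + O(h^\infty)$.

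The third step is the Laplace evaluation of the numerator. By construction, $|\nabla v_h|^2$ is $O(h^\infty)\, e^{-c/h}$ for some $c > 2(\min_{\partial\Omega} f - f(x_0))$ outside small neighborhoods of the saddles, so the integral splits as a sum over $z \in \mathscr{P}_{\rm sp}$ of local pieces. Near each $z$, I substitute the expansion $p_h \sim h^{-d/2}\, R^h\, e^{-(2/h)(f-f(x_0))}$ from Theorem \ref{th.sheu}, using the derivative control \eqref{eq:estw2} which is needed because $v_h$ carries $1/h$-scale derivatives. Combined with $f(y)= \min_{\partial\Omega} f + \partial_{n_\Omega}f(z)\, y_n + \tfrac{1}{2}\, y'^{T}\, \Hess f_{|\partial\Omega}(z)\, y' + O(|y|^3)$, the normal integral produces a factor proportional to $\partial_{n_\Omega}f(z)/h$, while the tangential Laplace integral yields $\sqrt{(\pi h)^{d-1}/\det \Hess f_{|\partial\Omega}(z)}$. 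Collecting powers of $h$ gives the global $h^{-1/2}$, and the formula \eqref{eq.R0} for $R_0$ together with \eqref{eq:calc0} for $c_0$ reorganize into precisely the prefactor $\zeta_0$ in \eqref{eq:defzeta0}. The higher-order coefficients $\zeta_j$ are obtained by pushing both the WKB expansion of $v_h$ and the expansion $R^h \sim \sum_k h^k R_k$ to all orders.

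The main obstacle is the quasi-mode construction in the non-Gibbsian setting. Unlike the reversible case, where the boundary-layer function near $z$ admits an essentially explicit form in terms of a weighted error function, here the WKB amplitudes $a_k$ solve transport equations along $\psi_t$ whose solutions naturally carry the non-local weight $\exp[\int_0^{+\infty} \div(\ell)(\psi_t(\cdot))\, dt]$ characteristic of $R_0$. Since $\ell(z)$ is not assumed to vanish, a nontrivial tangential component of $\ell$ on $\partial\Omega$ must be absorbed in the construction, which is exactly what prevents a direct adaptation of the quasi-mode built in \cite[Section 5.2]{peutrecCMP}. In addition, controlling the derivatives of the quasi-mode requires the strengthening of the Sheu–Mikami result up to arbitrary $C^k$ norms, which is precisely the content of \eqref{eq:estw2} in Theorem \ref{th.sheu}.
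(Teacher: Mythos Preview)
Your overall strategy matches the paper's, but there is a genuine gap in the spectral step, and it stems from an overstated estimate on $L_h v_h$.

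You claim $\|L_h v_h\|_{L^2(p_h dx)} = O(h^\infty)\, e^{-\frac 2h(\min_{\partial\Omega}f-f(x_0))}$, i.e.\ essentially $O(h^\infty)\lambda_h^{\rm app}$. This is too strong. The quasi-mode transitions from $0$ to $1$ in an $h$-thin layer near each saddle $z$, so $|L_h v_h|$ there is only $O(h^\infty)$ pointwise (the WKB construction kills all powers of $h$, not exponentials). Multiplying by $p_h\sim h^{-d/2}e^{-\frac 2h(f-f(x_0))}$ and integrating via Laplace yields
\[
\|L_h v_h\|_{L^2(p_h dx)}^2 \;=\; O(h^\infty)\, e^{-\frac 2h(\min_{\partial\Omega}f-f(x_0))} \;=\; O(h^\infty)\,\lambda_h^{\rm app},
\]
hence $\|L_h v_h\|_{L^2(p_h dx)} = O(h^\infty)\,(\lambda_h^{\rm app})^{1/2}$, which is exponentially \emph{larger} than $\lambda_h^{\rm app}$.

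With this correct bound, the passage from ``$\pi_h v_h = v_h + O(h^\infty(\lambda_h^{\rm app})^{1/2})$ in $L^2$'' to ``$\lambda_h = \langle L_h v_h,v_h\rangle/\|v_h\|^2\,(1+O(h^\infty))$'' breaks. Expanding $\langle L_h\pi_h v_h,\pi_h v_h\rangle$ produces the cross term
\[
\langle L_h(\pi_h-1)v_h,\,v_h\rangle \;=\; \langle (\pi_h-1)v_h,\,L_h^{\dag} v_h\rangle,
\]
and since $L_h$ is \emph{not} self-adjoint on $L^2(p_h dx)$, this can only be controlled with a bound on $\|L_h^{\dag} v_h\|_{L^2(p_h dx)}$. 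You never estimate this quantity. The paper handles it by proving separately (Item~[$\mathfrak 3$] of Proposition~\ref{prop:calculQM}) that $\|L_h^{\dag} u_h^{\rm app}\|^2_{L^2(p_h dx)}=O(h^{-1}\lambda_h^{\rm app})$, after computing $L_h^{\dag}$ explicitly via the invariant density; combined with $\|(\pi_h-1)u_h^{\rm app}\|=O(h^\infty(\lambda_h^{\rm app})^{1/2})$, the cross term becomes $O(h^\infty)\lambda_h^{\rm app}$ as needed. Without this adjoint estimate (or an equivalent device), the error term you carry is $O(h^\infty)(\lambda_h^{\rm app})^{1/2}$, which swamps the leading term $\lambda_h^{\rm app}$ and gives no information.

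A secondary point: in the paper the factor $\exp\big[\int_0^{+\infty}\div(\ell)(\psi_t(z))\,dt\big]$ enters through $R_0(z)$ in the Laplace evaluation of $\int|\nabla v_h|^2 p_h$, not through the quasi-mode amplitudes. The transport equations for the quasi-mode phase $\xi^z$ are along the characteristics of $G^{-1}\nabla\xi_0^z+b^\circ$ (transverse to $\partial\Omega$), not along $\psi_t$.
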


Note that combining Theorem \ref{th:sharpeigenval} with \eqref{eq.ex} yields the assertion of Theorem \ref{th.1}. The rest of this work is thus dedicated to the proof of Theorem \ref{th:sharpeigenval}.


  \section{Construction of an accurate quasi-mode for $u_h$}

  
  The goal of this section is to construct a very precise quasi-mode $u_h^{\rm{app}}$ for the principal eigenvalue $\lambda_h$ of $L_h$ in $L^2(\Omega, p_hdx)$, namely a   function approximating the principal eigenfunction $u_h$
  sufficiently well so that we can compute asymptotically $\lambda_h$ as $h\to 0$. The conditions on $u_h^{\rm{app}}$ are listed in Proposition \ref{prop:calculQM} below.  
  Roughly speaking, we want to choose the function   $u_h^{\rm{app}}$ equal to $1$ on a very large part of $\mathscr{C}_{min}$ but satisfying the boundary condition $u_h^{\rm{app}}=0$ on $\partial\Omega$. 
 This requires the construction of a quasi-mode $u_h^{\rm{app}}$ realizing the appropriate transition from $1$ to $0$ around $\partial\Omega$.
  The delicate part of  this construction  occurs around $\mathscr  P_{\rm sp}=\pa \mathscr{C}_{{\rm min}}\cap \pa \Omega$ (see \eqref{eq.PP}),  where we use  suitable local coordinates near each $z\in \mathscr  P_{\rm sp}$. 
 
 From now on, we assume \textbf{[A]}.
 
 \subsection{Local coordinates near $\mathscr  P_{\rm sp}$}\label{sec:coord}
 It turns out that the system of coordinates introduced in \cite{helffer-nier-06}  near such points 
 is well appropriate for  defining $u_h^{\rm{app}}$ and the upcoming computations.
Recall that $\mathscr  P_{\rm sp}=\pa \mathscr{C}_{{\rm min}}\cap \pa \Omega\neq \emptyset$ (see \eqref{eq.PP}) and that  thanks to  \textbf{[A$_{\mathscr  P_{\rm sp}}$]},  $\mathscr  P_{\rm sp}$ has a finite cardinality. 
In the following, we consider   $z\in \mathscr  P_{\rm sp}$.  
\begin{sloppypar}
Then, there exists  a neighborhood~$\mathscr U_z$   of $z$ in~$\overline \Omega$ and  a   coordinate system 
\begin{equation}\label{eq.cv-pa-omega-nablafnon0}
x\in \mathscr U_z \mapsto v=(v',v_d)=(v_1,\ldots,v_{d-1},v_{d})\in \mathbf R^{d}_-=\mathbf R^{d-1}\times \mathbf R_-
\end{equation}
 such that 
\begin{equation}\label{eq.cv-pa-omega-nablafnon02}
v(z)=0, \ \ \{x\in \mathscr U_z, \, v_d(x)<0\}=  \Omega\cap  \mathscr U_z, \ \{x\in \mathscr U_z,\,  v_d(x)=0\}=\pa \Omega \cap \mathscr U_z,
\end{equation}
 and
\begin{equation*}
\forall i,j\in\{1,\dots,d\},\ \ \ g_z\Big (\frac{\pa}{\pa v_i}(z),\frac{\pa}{\pa v_j}(z)\Big )=\delta_{ij}
\quad\text{and}
\quad
\frac{\pa}{\pa v_d}(z) = n_\Omega(z),
 \end{equation*}
where $ g_z$ is the metric tensor in the new coordinates.
We denote by $G=(G_{ij})_{1\leq i,j\leq d}$ its matrix, by $G^{-1}=(G^{ij})_{1\leq i,j\leq d}$ the inverse of $G$, and by $|G|=\det G$ its determinant. We also denote the canonical basis of $\mathbf R^{d}$ by  $(e_1,\dots, e_d)$. Then, defining $J:=\jac\, v^{-1}$, we have:  
\begin{equation}
\label{eq.G1}
G=\, ^t J J\,,\ \ G(0) =(\delta_{ij})\ \ \text{i.e.}\ \ {}^t J(0)=J^{-1}(0)\,,\ \ \text{and}\ \ n_\Omega(z) = J(0)e_d\,.
 \end{equation}
Let us now  determine    the operator $L_h$ in the above coordinates, see \eqref{eq.Lh}. Throughout this work, for any function $u$ defined on $\mathscr U_z$, we denote 
$\hat u=u\circ v^{-1}$ where $v$ is the above change of coordinates. For any $u\in C^\infty( \mathscr U_z)$, we have 
 $\widehat {L_h u}=\hat L_h\hat u$ with 
 \be\label{eq:Genercoord}
 \hat L_h=- \frac h{2\sqrt{|G|}} \div\circ  \sqrt{|G|} \,G^{-1}\circ\nabla+(G^{-1}\nabla \hat f+J^{-1}\hat \ell)\cdot\nabla.
 \ee
We can write  this operator as follows:
 \be\label{eq:Genercoord2}
 \hat L_h= -\frac h{2} \div\circ \, G^{-1}\circ\nabla- (b^\circ+h \rho^\circ)\cdot\nabla\,,
 \ee
 where $b^\circ$  and $\rho^\circ$   are the following smooth vector fields over $\mathscr U_z$:
 \be\label{eq:defbtilde}
b^\circ=-G^{-1}\nabla \hat f-J^{-1}\hat\ell \  \text{ and } \ \rho^\circ= \frac  1{2\sqrt{|G|}} \nabla (\sqrt{|G|}) \, G^{-1}.
\ee
  Let us now introduce the notation   $\ell^\circ=J^{-1}\hat\ell$, so that $b^\circ$ rewrites 
  \be\label{eq:defbtilde1}
b^\circ=-G^{-1}\nabla \hat f-\ell^\circ.
\ee
In addition,  according for example to~\cite[Section 3.4]{helffer-nier-06} (see also \cite{nectoux2017sharp}), 
the $v$-coordinates can be chosen such that:
\begin{equation}\label{eq.cv-pa-omega-nablafnon03}
\hat f(v',v_d)=f(z)+\mu_z v_d+ \frac 12 \,v' \cdot H_z v',
\end{equation} 
where 
\be\label{eq:defmuHz}
\mu_z:=\partial_{n_{\Omega}}f(z)>0\ \ \text{ and }\ \ H_z:=\Hess f|_{\partial\Omega}(z).
\ee
 Moreover, thanks to \textbf{[A$_{\mathscr  P_{\rm sp}}$]},    $0$ is   a non degenerate  (global) minimum of $\hat f|_{\{v_d=0\}}$.
  For $\delta_1>0$ and $\delta_2>0$ small enough, one finally defines  
  the following neighborhood of $z$  in $\overline \Omega$  (see~\eqref{eq.cv-pa-omega-nablafnon0}-\eqref{eq.cv-pa-omega-nablafnon02}),
\begin{equation}\label{eq.vois-11-pc}
\mathscr U^{\delta}_z=\big \{x\in \mathscr U_z,   \vert v'(x)\vert \le \delta_2  \text{ and } v_d(x)\in  [-2\delta_1, 0]\big  \},  \ \delta=(\delta_1,\delta_2).
\end{equation}
The set defined in  \eqref{eq.vois-11-pc}   is a cylinder centered at~$z$ in the $v$-coordinates. The parameters $\delta_1,\delta_2>0$ will be reduced a finite number of times to ensure several properties needed in Section \ref{prop:calculQM} to perform computations. Recall also that $f(z)=\min_{\pa \Omega}f>f(x_0)$. 
Then, up to choosing $\delta_1>0$ and $\delta_2>0$ smaller, we can assume that 
\begin{enumerate}  
 \item []\textbf{[C$^\delta_1$]}  The sets 
 $\mathscr U^{\delta}_z$, $z\in \mathscr  P_{\rm sp}$, are 
   pairwise disjoint, so in particular 
   $$ \argmin\limits_{ \mathscr U^{\delta}_z \cap \partial \Omega}f=\{z\}\,.$$
  \item []\textbf{[C$^\delta_2$]}   $\min_{\mathscr U^{\delta}_z}f>f(x_0)$,  so in particular    $x_0\notin \mathscr U^{\delta}_z$.
    \end{enumerate}  
Finally, according to \textbf{[C$^\delta_1$]} and using a continuity argument, once $\delta_2>0$ is fixed, 
one can choose  $\delta_1>0$ small enough  such that   
\begin{enumerate}   
    \item []\textbf{[C$^\delta_3$]}  There exists $r>0$  such that
$$
 \big \{x\in \mathscr U_z,   \vert v'(x)\vert =\delta_2  \text{ and } v_d(x)\in  [-2\delta_1, 0]\big  \} \subset\{f\ge f(z)+r\}.  
 $$ 
    \end{enumerate}  
We refer to Figure \ref{fig:S} for a schematic representations of the sets $\mathscr U^{\delta}_z$.  
\end{sloppypar}

\subsection{General form of the quasi-mode near $\mathscr  P_{\rm sp} $}
 Let $\chi\in C^\infty({\mathbf R}_{-},[0,1])$ be  a cut-off  function    such that (see \eqref{eq:defmuHz}):
\begin{equation}\label{eq.chi-cut}
\text{supp } \chi\subset  \Big [- \frac {\delta_1}{2}\mu_z ,  0\Big ] \, \text{ and } \,   \chi=1 \text{ on } \Big [-\frac{\delta_1}{4}\mu_z, 0\Big].
  \end{equation} 
  Define now, for each $z\in \mathscr  P_{\rm sp}$, the cylinder  $\mathscr V^{\delta}_z$  by 
\be\label{eq:defVz}
\mathscr V^{\delta}_z:=v\big (\mathscr U^{\delta}_z\big)=\{v=(v',v_d)\in\R^d,\;|v'|\le \delta_2,\;-2\delta_1\le v_d\le 0\}.
\ee
The set $\mathscr V^{\delta}_z$ is a neighborhood of $0$ in $\mathbf R_-^d$. 
For every $z\in \mathscr  P_{\rm sp}$, we look for a quasi-mode $u_h^{\rm{app}}$  defined on the cylinder $\mathscr U^{\delta}_z$ 
by
 \begin{equation}\label{eq.qm-local1x}
\forall x \in   \mathscr U^{\delta}_z, \  u_h^{\rm{app}}(x):= \varphi_{z} (v(x)) \
  \end{equation}
with a function $\varphi_z$  defined on the set $\mathscr V^{\delta}_z$  
by
 \begin{equation} \label{eq.qmPhi}
\forall v  \in  \mathscr V^{\delta}_z, \  \  \varphi_{z} (v):=\frac1{ N_{z,h}} \int_{\xi^z(v,h)}^0\chi(t)e^{\frac t h}  dt,
 \end{equation}
where for every $h\in]0,1]$, $v\in \mathscr V^{\delta}_z\mapsto\xi^z(v,h)$ is  a real nonpositive smooth function
 which will be constructed later, and $N_{z,h}$ is the normalizing constant
\be\label{eq:estimNhz}
N_{z,h}\ :=\ \int_{-\infty}^0\chi(t)e^{\frac t h}  dt\ =\ h+O(e^{-\frac ch}).
\ee 
We now turn to the  construction of an appropriate function $\xi^z$
vanishing on $\{v_{d}=0\}$ and
 such that $u_h^{\rm{app}}$
 satisfies  the Dirichlet boundary condition on $\partial \Omega$.

 \subsection{Construction of the function  $\xi^z$}
 
 We begin this section by deriving equations that shall satisfy  $\xi^z$ in order to make $u_h^{\rm{app}}$ sufficiently close   to the principal eigenfunction $u_h$ near each $z$. 
To this end, let us fix  $z\in  \mathscr  P_{\rm sp}$. 


 Since $\lambda_h$ is exponentially small,
we look for 
a function $\xi^z$ such that 
$u_h^{\rm{app}}$ is
an approximate solution of
 $L_hu_h^{\rm{app}}=0$.
More precisely, we look for a smooth function $\xi^z$ admitting a $h$-classical expansion $\xi^{z}\sim\sum_{j\geq 0} h^j\xi^z_j$
in $\mathscr V^{\delta}_z$
such that  $\xi^z_0\not\equiv0$, $\xi^z$
vanishes on $\{v_{d}=0\}$, and  $L_hu_h^{\rm{app}}(x)=O(h^\infty)$ in $\mathscr U^\delta_z$.
The latter relation reads in the $v$-coordinates 
 $$\hat L_h\varphi_z=O(h^\infty).$$ 
By \eqref{eq.qmPhi},  one has $\nabla \varphi_z=-  {\chi(\xi^z)e^{\xi^z/h} \nabla \xi^z }/{N_{z,h}} $. Consequently,  using   \eqref{eq:Genercoord2}, one has  
\begin{align}
 \nonumber
 \hat L_h \varphi_z &=-\frac h{2} \div( G^{-1} \nabla \varphi_z )- (b^\circ+h\rho^\circ)\cdot \nabla \varphi_z \\
 \nonumber
&=\frac {\chi(\xi^z)e^{\xi^z/h}}{N_{z,h}}\Big[\frac 12(G^{-1}\nabla\xi^z)\cdot \nabla\xi^z +\big[b^\circ\cdot\nabla\xi^z+h\big (\rho^\circ \cdot\nabla\xi^z+ \frac 12\div(G^{-1}\nabla\xi^z)\big ) \big ]  \Big]\\
 \label{eq:acthatLhQM}
&\quad  +\frac h{2 N_{z,h}} (G^{-1}\nabla\xi^z)\cdot\nabla\xi^z \, \chi'(\xi^z)e^{\xi^z/h}.
 \end{align}   
Note moreover that
 \eqref{eq.chi-cut}
and  \eqref{eq:estimNhz} imply that
the last term of \eqref{eq:acthatLhQM}
is of the order $O(e^{-\frac ch})$
for some $c>0$.
Hence,   in order to ensure  $\hat L_h\varphi_z=O(h^\infty)$, it is sufficient
to choose $\xi^{z}\sim\sum_{j\geq 0} h^j\xi^z_j$ such that
\be\label{eq:phase1}
\frac 12(G^{-1}\nabla\xi^z)\cdot \nabla\xi^z + b^\circ\cdot\nabla\xi^z+h\big (\rho^\circ\cdot\nabla\xi^z+ \frac 12\div(G^{-1}\nabla\xi^z)\big )  =O(h^\infty).
\ee
Identifying the powers of $h$ in \eqref{eq:phase1}, this amounts to
the following equations:
\be
\label{eq:eik}\tag{E}
G^{-1}\nabla\xi^z_0\cdot \nabla\xi^z_0+2b^\circ\cdot\nabla\xi^z_0=0
\ee
and, for all $j\geq 1$,
\be\label{eq:transp_j}\tag{T-j}
(G^{-1}\nabla\xi^z_0+b^\circ)\cdot\nabla\xi_j^z=\mathcal Q_j,
\ee
where $\mathcal Q_j$ is a function which depends smoothly on the functions $\xi^z_k$ and their derivatives for $k\in \{0,\ldots,j-1\}$. Equation  \eqref{eq:eik} is an eikonal equation while Equations \eqref{eq:transp_j}, $j\geq 1$, are transport equations.

The existence of functions $\xi_{j}^z$, $j\geq0$, satisfying these equations 
 follows from standard
results on non-linear first order PDE with non-characteristic boundary (see for example \cite[pages 7 to 9]{dimassi-sjostrand-99} or \cite[Section 3.2 in Part I]{Eva}). We are more specific
below.

\subsubsection{Resolution of the eikonal equation} 
In this section, we look for a solution $\xi^z_0\not\equiv0$ of
\eqref{eq:eik} which vanishes in a neighborhood of $0$ in 
 the hyperplane $\{v_{d}=0\}$.

The fact that $\{v_{d}=0\}$ is non-characteristic near $0$ means that 
the vector field $b^\circ$ involved in \eqref{eq:eik} is transverse to 
$\{v_{d}=0\}$ near $0$. 
Indeed, we have $b^\circ=-G^{-1}\nabla \hat f-\ell^\circ$,
$G(0)=\Id$, and $\nabla \hat f(0)= \mu_{z}e_d$  according respectively to \eqref{eq:defbtilde1}, 
\eqref{eq.G1}, and \eqref{eq.cv-pa-omega-nablafnon03}, where
 we recall that $(e_1,\ldots,e_d)$ denotes the canonical basis of $\R^d$.
 Since moreover $\nabla\hat f(0)\cdot\ell^\circ=0$ according to
\textbf{[A$\perp$]} and $\mu_{z}>0$ (see \eqref{eq:defmuHz}), it follows that
\begin{equation}
\label{eq:orthogell0}
b^\circ(0)= -\mu_{z}e_d-\ell^\circ(0)\quad\text{and}\quad e_{d}\cdot\ell^\circ(0)=0\,.
\end{equation}
Hence, the vector  $b^\circ$ is transverse to $\{v_{d}=0\}$ at  $0$, and thus near $0$ by continuity.

We can thus apply \cite[Theorem~1.5]{dimassi-sjostrand-99} to
$$p(v,\eta):=G^{-1}(v)\eta\cdot \eta+2b^\circ(v)\cdot\eta\quad\text{around $(0,\eta^*)\in\R^{d}\times\R^{d}$}$$
for any $\eta^*=(\eta'^{*},\eta_{d}^*)\in\R^{d}$ satisfying $p(0,\eta^*)=0$:
for any smooth real function $\psi$ defined near $0$ in $\R^{d-1}$ such that $\nabla_{x'}\psi(0)=\eta'^{*}$,
there exists a unique  smooth real function $\xi^z_0$ defined around $0\in \R^{d}$
such that on this neighborhood $p(x,\nabla \xi^z_0(x))=0$, $\xi^z_0(x',0)=\psi(x')$, and $\nabla \xi^z_0(0) = \eta^*$.
Since we look for $\xi^z_0\not\equiv0$ vanishing on $\{v_{d}=0\}$, this amounts to choose $\psi\equiv 0$
and thus $\eta'^{*}=0$, and
$\eta^*_{d}$ as the nonzero solution of 
$p(0,(0,\eta^*_{d}))=|\eta^*|^{2}+2b^\circ(v)\cdot \eta^*=0$,
that is to take $\psi\equiv0$, $\eta'^{*}=0$, and
$$
\eta^*_{d}:=-2b^\circ(0)\cdot e_{d}=2\mu_{z}.
$$
Note also that Taylor's theorem with integral form of the remainder then  implies that 
$\xi^z_0$ factorizes as $\xi^z_0 = v_{d} a$, where
$a$ is a smooth function defined around  $0\in \R^{d}$. 
Since 
$\nabla \xi^z_0(0)=\eta^*$, we have
in addition $a(0)=\eta^*_{d}=2\mu_{z}$, and
we have thus proved the

\begin{proposition}\label{prop:solveik}
There exists a function $a\in C^\infty (\R^d)$ 
satisfying
$a(0)=2\mu_z$ such that
 the function $\xi_0^z$ defined by $\xi_0^z(v)=v_da(v)$ 
 satisfies the eikonal equation \eqref{eq:eik} in a neighborhood of $0$.
\end{proposition}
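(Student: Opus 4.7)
The plan is to realize $\xi_0^z$ as a Hamilton--Jacobi solution associated with the Hamiltonian
$$p(v,\eta) := G^{-1}(v)\eta\cdot \eta + 2\, b^\circ(v)\cdot \eta,$$
with vanishing Cauchy data on the hyperplane $\{v_d = 0\}$, so that the eikonal equation \eqref{eq:eik} is exactly $p(v,\nabla\xi_0^z(v)) = 0$. The boundary condition $\xi_0^z(v',0) = 0$ forces $\nabla_{v'}\xi_0^z(0) = 0$, so the admissible initial covector is of the form $\eta^* = (0,\eta^*_d)$; the compatibility requirement $p(0,\eta^*) = 0$ combined with $G(0)=\Id$ and \eqref{eq:orthogell0} gives $(\eta^*_d)^2 - 2\mu_z\eta^*_d = 0$. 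The nontrivial choice is $\eta^*_d = 2\mu_z$.

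The next step is to verify the non-characteristic condition for $\{v_d = 0\}$ at $(0,\eta^*)$, namely that $\partial_{\eta_d} p(0,\eta^*) \neq 0$. A direct computation using \eqref{eq.G1} and \eqref{eq:orthogell0} yields
$$\partial_{\eta_d} p(0,\eta^*) = 2\bigl(G^{-1}(0)\eta^*\bigr)_d + 2\, b^\circ(0)\cdot e_d = 2(2\mu_z) + 2(-\mu_z) = 2\mu_z > 0.$$
Once this transversality is established, the standard local Hamilton--Jacobi theorem already cited in the excerpt (\cite[Theorem 1.5]{dimassi-sjostrand-99}, or the construction by the method of characteristics in \cite[Section 3.2]{Eva}) produces a unique smooth real-valued function $\xi_0^z$ defined in a neighborhood of $0 \in \mathbf R^d$ such that $p\bigl(v,\nabla\xi_0^z(v)\bigr) = 0$, $\xi_0^z(v',0) = 0$, and $\nabla\xi_0^z(0) = \eta^* = 2\mu_z\, e_d$.

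It remains to extract the factorization $\xi_0^z(v) = v_d\, a(v)$. Since $\xi_0^z$ is smooth and vanishes on $\{v_d = 0\}$, Taylor's formula with integral remainder gives
$$\xi_0^z(v',v_d) = v_d \int_0^1 \partial_{v_d}\xi_0^z(v',t v_d)\, dt,$$
and setting $a(v) := \int_0^1 \partial_{v_d}\xi_0^z(v',t v_d)\, dt$ provides a smooth function near $0$ with $a(0) = \partial_{v_d}\xi_0^z(0) = \eta^*_d = 2\mu_z$. Extending $a$ by a cut-off to all of $\mathbf R^d$ does not affect the eikonal equation near $0$, which concludes the proof. The only real content is the non-characteristic verification; the rest is a direct application of the Hamilton--Jacobi theorem and Taylor expansion.
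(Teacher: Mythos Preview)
Your proof is correct and follows essentially the same route as the paper: set up the Hamiltonian $p(v,\eta)=G^{-1}\eta\cdot\eta+2b^\circ\cdot\eta$, choose the nontrivial root $\eta^*=2\mu_z e_d$ compatible with zero Cauchy data on $\{v_d=0\}$, invoke the local Hamilton--Jacobi theorem, and then factor out $v_d$ via Taylor's formula with integral remainder. The only cosmetic difference is that the paper phrases the non-characteristic condition as transversality of $b^\circ$ to $\{v_d=0\}$ at $0$, whereas you check the more standard Hamilton--Jacobi version $\partial_{\eta_d}p(0,\eta^*)=2\mu_z\neq0$; both lead to the same application of \cite[Theorem~1.5]{dimassi-sjostrand-99}.
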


%

\subsubsection{ Resolution of the transport equations}
 The following proposition permits to solve the equations \eqref{eq:transp_j}. 
 \begin{proposition}\label{prop:solvtranspj}
 There exists a  neighborhood    $\mathscr V$ of $0$  in $\mathbf R^d_-$ and a sequence of functions $(\xi^z_j)_{j\ge 1}$ such that for all $j\ge 1$,  $\xi^z_j\in C^\infty(\mathscr V)$ satisfies  \eqref{eq:transp_j}  on $\mathscr V$ and vanishes on $\mathscr V\cap\{v_d=0\}$.
 \end{proposition}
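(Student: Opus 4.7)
The plan is to solve each transport equation \eqref{eq:transp_j} by the method of characteristics associated with the smooth vector field
\[
X\ :=\ G^{-1}\nabla\xi^z_0+b^\circ,
\]
which is well defined on some neighborhood of $0$ in $\R^d$ thanks to Proposition \ref{prop:solveik}. First, I would check that $X$ is transverse to the hyperplane $\{v_d=0\}$ at $0$. Using $G^{-1}(0)=\Id$ from \eqref{eq.G1}, $\nabla\xi_0^z(0)=2\mu_{z}e_{d}$ from Proposition \ref{prop:solveik}, and $b^\circ(0)\cdot e_{d}=-\mu_{z}$ from \eqref{eq:orthogell0}, one gets $X(0)\cdot e_{d}=2\mu_{z}-\mu_{z}=\mu_{z}>0$, so that by continuity $X$ remains transverse to $\{v_d=0\}$ on some neighborhood of $0$.

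Second, I would straighten $X$ via its flow. Let $\Phi(v',s)$ denote the point at time $s$ of the integral curve of $X$ starting at $(v',0)\in\R^{d-1}\times\{0\}$. The transversality just established together with the inverse function theorem yield an open neighborhood $\mathscr W$ of $0$ in $\R^{d-1}\times\R$ on which $\Phi$ is a diffeomorphism onto a neighborhood of $0$ in $\R^d$. Shrinking $\mathscr W$ if necessary, $\Phi$ sends $\mathscr W_-:=\mathscr W\cap\{s\le 0\}$ diffeomorphically onto an open neighborhood $\mathscr V$ of $0$ in $\R^d_-$ and maps $\mathscr W\cap\{s=0\}$ precisely onto $\mathscr V\cap\{v_d=0\}$.

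Third, I would construct the $\xi^z_j$'s by induction on $j\ge 1$. Assume that $\xi^z_1,\dots,\xi^z_{j-1}\in C^\infty(\mathscr V)$ have been obtained, each solving the corresponding transport equation \eqref{eq:transp_j} on $\mathscr V$ and vanishing on $\mathscr V\cap\{v_d=0\}$ (the case $j=1$ only uses $\xi^z_0$). Since $\mathcal Q_j$ depends smoothly on $\xi^z_0,\dots,\xi^z_{j-1}$ and their derivatives, it lies in $C^\infty(\mathscr V)$. Setting $\tilde\xi^z_j:=\xi^z_j\circ\Phi$ and $\tilde{\mathcal Q}_j:=\mathcal Q_j\circ\Phi$, the equation \eqref{eq:transp_j} together with the boundary condition is equivalent to
\[
\partial_s\tilde\xi^z_j(v',s)=\tilde{\mathcal Q}_j(v',s)\ \text{on}\ \mathscr W_-,\qquad \tilde\xi^z_j(v',0)=0,
\]
which is uniquely solved by the smooth function $\tilde\xi^z_j(v',s):=\int_0^s \tilde{\mathcal Q}_j(v',t)\,dt$. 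Setting back $\xi^z_j:=\tilde\xi^z_j\circ\Phi^{-1}\in C^\infty(\mathscr V)$ then gives a solution of \eqref{eq:transp_j} on $\mathscr V$ vanishing on $\mathscr V\cap\{v_d=0\}$, which closes the induction.

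The only mild difficulty is to ensure that a single neighborhood $\mathscr V$ works for all $j\ge 1$ simultaneously; this is however automatic here because the straightening diffeomorphism $\Phi$ is determined solely by the characteristic field $X$ (hence by $\xi^z_0$) and is thus independent of $j$.
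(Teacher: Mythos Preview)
Your proof is correct and follows essentially the same route as the paper: both verify that the characteristic field $X=G^{-1}\nabla\xi^z_0+b^\circ$ is transverse to $\{v_d=0\}$ at $0$ via $X(0)\cdot e_d=\mu_z>0$, straighten it by its flow from the boundary, and then integrate the right-hand side along characteristics, noting that the resulting neighborhood depends only on $\xi^z_0$ and hence works uniformly in $j$. Your write-up is slightly more explicit about the inverse function theorem and the correspondence $\{s\le 0\}\leftrightarrow\{v_d\le 0\}$, but the argument is the same.
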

 \noindent
 \bp We proceed  by induction on $j\geq 1$.
 All the transport equations have the same structure, only the right hand side of \eqref{eq:transp_j} depending on the preceding step. Hence, it is sufficient to prove that there exists a neighborhood    $\mathscr V$ of $0$ such that, for any smooth function $\mathcal Q$   defined on $\mathscr V$, we can find a smooth function $u$ on $\mathscr V$  which vanishes on 
 $\{v_d=0\}$ and solves
 \be\label{eq:solvtransp0}
 (G^{-1}\nabla\xi_0^z+b^\circ)\cdot\nabla u=\mathcal Q.
 \ee
Let us  recall that $\nabla \xi^z_0(0)=2\mu_{z}e_{d}$  (see Proposition \ref{prop:solveik} and the lines above) and 
$b^\circ(0)=-\mu_z e_d-\ell^\circ_0(0)$ with $e_{d}\cdot\ell^\circ(0)=0$
(see \eqref{eq:orthogell0}). Thus, the vector field $F^\circ:=G^{-1}\nabla\xi_0^z+b^\circ$ satisfies 
$F^\circ(0)=\mu_z e_d-\ell^\circ_0$ and
is 
transverse to $\{v_{d}=0\}$ around~$0$.

Hence, the  characteristics curves 
$$
\frac {d}{dt}y_t=F^\circ(y_t), \ y_0(v')=v'$$
define a smooth coordinate system $(t,v')\in \mathbf R_-\times \mathbf R^{d-1}\mapsto y_t(v')$ near $0$ in $\mathbf R_-^d$. We then define $u$ in these coordinates by $
u(y_t(v'))=\int_0^t\mathcal Q(y_s(v'))ds$. 
It is straightforward to deduce that  the function $u$ solves \eqref{eq:solvtransp0}. Moreover taking $t=0$ in the above equation, one gets 
$u(v',0)=0$. The proof of the proposition is complete. 
\ep

A Borel construction then leads to the existence of a solution $\xi^z$ to \eqref{eq:phase1}.

\begin{proposition}\label{prop:definQMbord}
Let $(\xi^z_j)_{j\geq 0}$ be the sequence of functions given by Propositions~\ref{prop:solveik} and~\ref{prop:solvtranspj}. 
Then, there exist a  neighborhood    $\mathscr V$ of $0$  in $\mathbf R^d_-$ and a family of smooth 
functions $\xi^z=\xi^{z} (\cdot , h)$, $h\in]0,1]$, 
admitting 
the $h$-classical expansion $\xi^z\sim\sum_{j\geq 0}h^j\xi^z_j$ on $\mathscr V$,
which vanishes on $\mathscr V\cap\{v_d=0\}$
and satisfies on $\mathscr V$,
$$
\frac 12(G^{-1}\nabla\xi^z)\cdot \nabla\xi^z + b^\circ\cdot\nabla\xi^z+h\big (\rho^\circ\cdot\nabla\xi^z+ \frac 12\div(G^{-1}\nabla\xi^z)\big )  =O(h^\infty).
$$
\end{proposition}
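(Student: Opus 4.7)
My plan is a Borel summation argument that synthesizes the sequence of functions $\xi_j^z$ into a single smooth $h$-dependent function $\xi^z(v,h)$ whose $h$-classical expansion is the formal series $\sum_{j \geq 0} h^j \xi_j^z$. First, I would secure a common domain of definition: by shrinking and intersecting the neighborhoods provided by Propositions \ref{prop:solveik} and \ref{prop:solvtranspj}, we may assume that every $\xi_j^z$, $j\geq 0$, is smooth on a single open neighborhood $\mathscr V$ of $0$ in $\mathbf R^d_-$ and vanishes on $\mathscr V \cap \{v_d=0\}$.

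Next, I would fix a cutoff $\chi_0 \in C_c^\infty(\mathbf R,[0,1])$ with $\chi_0 \equiv 1$ near $0$ and choose a sequence $\varepsilon_j \downarrow 0$ decaying sufficiently fast that
\[
\xi^z(v,h) \;:=\; \sum_{j\geq 0} h^j\,\chi_0(h/\varepsilon_j)\,\xi_j^z(v)
\]
defines a smooth function on $\mathscr V \times (0,1]$, together with all its derivatives in $(v,h)$. This is the standard Borel lemma: since $\chi_0(h/\varepsilon_j)$ is supported in $\{h \lesssim \varepsilon_j\}$, the product $h^j\,\chi_0(h/\varepsilon_j)$ is bounded by $\varepsilon_j^{\,j}$ uniformly in $h$, and a diagonal extraction of the $\varepsilon_j$'s makes the series (and its term-by-term derivatives) summable in every $C^k$-norm on every compact of $\mathscr V \times (0,1]$. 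By construction, $\xi^z$ vanishes on $\mathscr V \cap \{v_d=0\}$, and for every $N \in \mathbf N$ one has $\chi_0(h/\varepsilon_j) = 1$ for all $j \leq N$ and $h$ small enough, so that
\[
\xi^z \;-\; \sum_{j=0}^N h^j\,\xi_j^z \;=\; \sum_{j>N} h^j\,\chi_0(h/\varepsilon_j)\,\xi_j^z \;=\; O(h^{N+1})
\]
in every $C^k$-norm on compacts of $\mathscr V$. This gives the announced $h$-classical expansion.

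Finally, to check the PDE, I would substitute the formal expansion of $\xi^z$ into the left-hand side of \eqref{eq:phase1} and identify powers of $h$. The $h^0$-coefficient is $\tfrac 12 (G^{-1}\nabla \xi_0^z) \cdot \nabla \xi_0^z + b^\circ \cdot \nabla \xi_0^z$, which vanishes by the eikonal equation \eqref{eq:eik}; for $j \geq 1$, the $h^j$-coefficient is of the form $(G^{-1}\nabla \xi_0^z + b^\circ)\cdot \nabla \xi_j^z - \mathcal Q_j$, where $\mathcal Q_j$ depends smoothly on $\xi_0^z, \ldots, \xi_{j-1}^z$ and their derivatives, and thus vanishes by the transport equation \eqref{eq:transp_j}. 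Combined with the $C^\infty$-control of the remainder obtained in the previous step, this yields the $O(h^\infty)$ estimate on compacts of $\mathscr V$.

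The only delicate point is the Borel summation step: choosing the $\varepsilon_j$'s by diagonal extraction so that convergence holds simultaneously in every $C^k$-norm for every partial derivative in $(v,h)$. This is however entirely classical, and does not pose any genuine obstacle; the content of the proposition is really contained in the previous two propositions, and here we only repackage them into a single function.
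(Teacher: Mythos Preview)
Your proposal is correct and follows exactly the approach indicated in the paper, which simply states that ``a Borel construction then leads to the existence of a solution $\xi^z$ to \eqref{eq:phase1}'' without further detail. Your write-up in fact supplies more detail than the paper does, and the verification that the nonlinear expression \eqref{eq:phase1} is $O(h^\infty)$ (by comparing $\xi^z$ with its $N$-th partial sum in $C^2$ and using that the coefficients of $h^0,\dots,h^N$ vanish by \eqref{eq:eik} and \eqref{eq:transp_j}) is carried out correctly.
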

In particular, we have indeed built up a function $\varphi_z$ in the $v$-coordinates  such that $\hat L_h\varphi_z=O(h^\infty)$   near $0$ in $\mathbf R_-^d$.  
  
  \subsection{Extra conditions on the size parameters $\delta_1,\delta_2$}
  \label{sec.Ec} 
  Recall  that for $\delta_1, \delta_2>0$,  $\delta=(\delta_1,\delta_2)$  measures the sizes of the cylinders  $\mathscr V^{\delta}_z=v (\mathscr U^{\delta}_z)$ (see \eqref{eq:defVz}). 
In this section, we  adjust   the size parameters $\delta_1,\delta_2$ to get the extra conditions  \textbf{[C$^\delta_4$]} to \textbf{[C$^\delta_8$]} below which will be needed  in the quasi-modal estimates of Section \ref{sec.LAL} and in the definition of the quasi-mode $u_h^{\rm{app}}$ (see \eqref{eq:defnormquasi-mode} below). This adjustment is made while preserving the properties  \textbf{[C$^\delta_1$]} to \textbf{[C$^\delta_3$]} of  these neighborhoods   which were  imposed in Section \ref{sec:coord}.

First of all, introduce the following notation (see \eqref{eq:defmuHz})  for $v\in \mathscr V^{\delta}_z$:
 \be\label{eq:deftildef}
Q(v):=\mu_zv_d+\frac 12 H_zv'\cdot v' \text{ and } Q^+(v)=-\mu_z v_d+\frac 12 H_zv'\cdot v'.
\ee 
Note that $Q^+\ge 0$. 
Recall also that $\mu_z>0$, $H_z$ is a positive definite matrix, and that according to Proposition \ref{prop:solveik}, 
 $$\xi_0^z= 2\mu_z v_d+ v_d \mathscr A(v),$$
  where  $\mathscr A$ is a smooth function such that $\mathscr A(0)=0$.
Therefore,  using \eqref{eq:deftildef}, we get for $v\in \mathscr V^{\delta}_z$:\be\label{eq:phaseapp}
Q(v)-\xi_0^z(v) =Q^+(v) -v_d\mathscr A(v)=- v_d (\mu_z +\mathscr A(v))+\frac 12 H_zv'\cdot v'.
\ee
Note also that $|v_d\mathscr A(v)|\le c |v_d||v|\le c|v_d|^2+ c|v_d||v'|\le c(1+ \epsilon^{-1})|v_d|^2 + c\epsilon |v'|^2$ for some $c>0$ and any $\epsilon>0$.  
Thus, by \eqref{eq:phaseapp}, one has:
$$Q(v)-\xi_0^z(v)\ge -\mu_z v_d- c(1+ \epsilon^{-1})|v_d|^2 +\frac 12 H_zv'\cdot v'   - c\epsilon |v'|^2.$$
On the other hand,  recall that the support  of $1-|\chi|^2$ is included in the set $\{t\in \mathbf R, t\le -\mu_z\delta_1/4\}$ (see \eqref{eq.chi-cut}). 
Consequently, choosing $\epsilon >0$ small enough above and up to decreasing  $\delta_1, \delta_2>0$, the two  following conditions hold:
\begin{enumerate}  
 \item []\textbf{[C$^\delta_4$]}  $\argmin_{\mathscr V^{\delta}_z}(Q-\xi_0^z)=\{0\}$. 
 
   \item []\textbf{[C$^\delta_5$]}   $(1-|\chi|^2)(\xi_0^z(v))\neq 0$ and $v\in \mathscr V^{\delta}_z$ imply that  
  $$v_d\le -\delta_1/9 \text{ and } Q(v)-\xi_0^z(v)\ge   \mu_z \delta_1/10.$$
  \end{enumerate}

In addition, recall  (see Proposition \ref{prop:definQMbord}) that the family of functions  $\xi^z(\cdot,h)  \sim\sum_{j\geq 0}h^j\xi^z_j$ is defined in a $h$-independent  neighborhood of $0$ in $\mathbf R_-^d$. Therefore, up to decreasing again $\delta_1, \delta_2>0$, there exists $h_0>0$ such that for all $h\in]0,h_0]$: 
 \begin{enumerate}  
 \item []\textbf{[C$^\delta_6$]}  The function $v=(v',v_d)\mapsto \xi^z(v,h)$ is well defined 
and satisfies Proposition~\ref{prop:definQMbord}
 on $\mathscr V^{\delta}_z$, and for very $v\in \mathscr V^{\delta}_z$,  $\xi^z((v',v_d), h)<0$ when $v_d<0$.
  \item []\textbf{[C$^\delta_7$]} For any $v=(v',v_d)\in \mathscr V^{\delta}_z$, $\xi^z((v',v_d),h)\le -\delta_1\mu_z$ when $v_d\le -\delta_1$.  
\item[]\textbf{[C$^\delta_8$]} For any $v\in \mathscr V^{\delta}_z$, one has
$$
\xi^z(v,h)\in\supp \chi'\Rightarrow v_d\le -\frac{\delta_1} 9 \text{ and }Q(v)-\xi^z(v,h)\ge    \mu_z \delta_1/10.
$$
 \end{enumerate}
Note that in order to deduce that  $\xi^z((v',v_d), h)<0$ when $v_d<0$ for $v\in \mathscr V^{\delta}_z$, we used
the fact that $\xi^z((v',v_d), h)=v_{d}(2\mu_{z}+O(v)+O(h))$.

\subsection{Definition of the quasi-mode}
\label{sec.Qmm}
We  are now in position to define  $u_h^{\rm{app}}$ near the generalized saddle point $z\in \mathscr  P_{\rm sp}$ (see \eqref{eq.PP}). Let $\xi^z$ be given by Proposition \ref{prop:definQMbord} and $\chi$ satisfying \eqref{eq.chi-cut}. 
Recall that    (see \eqref{eq:defVz} and \eqref{eq.qmPhi}): 
\begin{equation}\label{eq.qm-local11}
\forall v  \in \mathscr V^{\delta}_z, \  \  \varphi_{z} (v)=\frac1{ N_{z,h}} \int_{\xi^z(v,h)}^0\chi(t)e^{\frac t h}  dt, \ \ \text{where}\ \  N_{z,h}:=\int_{-\infty}^0\chi(t)\, e^{\frac t h}  dt. 
     \end{equation} 
Note that $\varphi_z$ also depends on $h>0$ but for ease of notation, we have decided not to indicate this dependency in its notation. 
By construction of $\xi^z(\cdot,h)$, and using  \textbf{[C$^\delta_6$]} and \textbf{[C$^\delta_7$]}, it holds for $h>0$ small enough:
      \begin{equation}\label{eq.qm-local1-property12}
  \left\{
    \begin{array}{ll}
    \varphi_{z}\in   C^\infty\big (\mathscr V^{\delta}_z,[0,1]\big ),\\\varphi_z(v',0)=0, \\
\forall (v',v_d)\in \mathscr V^{\delta}_z,\, \varphi_{z}(v',v_d)=1 \text{  when  } v_d \in [-2\delta_1,-\delta_1].
 \end{array}
\right.
\end{equation}
We now want to glue all these definitions near $z\in\mathscr  P_{\rm sp}$ into a globally defined quasi-mode $u_h^{\rm{app}}$ over $\overline \Omega$ vanishing on $\partial \Omega$. Recall the conditions \textbf{[C$^\delta_1$]} to \textbf{[C$^\delta_8$]}.  
 On the other hand,   for every
$x\in \pa   \mathscr{C}_{{\rm min}}$, one has  $\nabla f(x)\neq 0$, which implies 
that for every~$r>0$ small enough, $\{f<f(x)\}\cap   B(x,r)$ is connected  and thus included in $ \mathscr{C}_{{\rm min}}$.

These considerations imply the existence of the following subsets 
$\mathscr{C}_{-}$ and~$\mathscr{C}_{+}$ of~$\Omega$.

\begin{proposition}\label{pr.omega1}  Assume {\rm \textbf{[A]}}.  Then,  there exist two $\mathcal C^\infty$  connected open sets~$\mathscr{C}_{-}$ and~$\mathscr{C}_{+}$ of $\Omega$  satisfying the following properties:
 \begin{enumerate}
 \item []{$\mathfrak 1$.} 
  $\overline{\mathscr{C}}_{{\rm min}} \subset \mathscr{C}_{+} \cup \pa \Omega$. 
  \item[]{$\mathfrak 2$.}   $\overline{\mathscr{C}}_{+}$ is a neighborhood  in $\overline\Omega$ of each set $ \mathscr U^{\delta}_z$, for $z\in\mathscr  P_{\rm sp}$.
 \item[]{$\mathfrak 3$.}      $\overline{\mathscr{C}}_{-}\subset \mathscr{C}_{+}$ and the strip $\overline{\mathscr{C}}_{+}\setminus \mathscr{C}_{-}$ satisfies
\begin{equation}\label{stripS}
\exists c>0\,,\ f\geq f(x_{0})+c \ \text{on}\ \overline{\mathscr{C}}_{+}\setminus \mathscr{C}_{-}
\quad\text{and}\quad
 \overline{\mathscr{C}}_{+}\setminus \mathscr{C}_{-}= 
 \bigcup \limits_{z\in \mathscr  P_{\rm sp}}  \mathscr U^{\delta}_z   \  \bigcup \   \mathscr O,
\end{equation}
where the subset $\mathscr O$ of $\overline\Omega	$  is such that:
\begin{equation}\label{eq.SS}
\exists c>0\,, \  f\ge \min_{\pa \Omega}f  + c  \ \text{on}\ \mathscr O.
\end{equation}
\end{enumerate}
\end{proposition}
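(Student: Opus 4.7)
The plan is to construct $\mathscr{C}_{-}\subset\mathscr{C}_{+}$ explicitly from sublevel sets of $f$ together with the cylinders $\mathscr U^{\delta}_{z}$, and then to smooth the resulting open sets.

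I would first record the geometry. By \textbf{[A$_{x_{0}}$]}, $x_{0}$ is the unique critical point of $f$, and since $f(x_{0})<\min_{\partial\Omega}f$, every value $\mu>f(x_{0})$ is regular for $f$, so each level $\{f=\mu\}$ is a smooth compact hypersurface of $\R^{d}$. In particular, $\partial\mathscr{C}_{\rm min}=\{f=\min_{\partial\Omega}f\}$ is smooth, meets $\partial\Omega$ exactly at the points of $\mathscr P_{\rm sp}$ (by Lemma \ref{le.Geo}), and this intersection is transverse at each $z$ because $\partial_{n_{\Omega}}f(z)=\mu_{z}>0$. By a compactness argument one can fix a small $c_{1}>0$ with $\min_{\partial\Omega}f-c_{1}>f(x_{0})+c_{1}$ and such that $\{f\le\min_{\partial\Omega}f+c_{1}\}\cap\partial\Omega$ is contained in $\bigcup_{z\in\mathscr P_{\rm sp}}(\mathscr U^{\delta}_{z}\cap\partial\Omega)$.

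Next, I would build $\mathscr{C}_{+}$. For each $z\in\mathscr P_{\rm sp}$ pick an open neighborhood $V_{z}$ in $\overline\Omega$ of $\mathscr U^{\delta}_{z}$, contained in the local $v$-chart near $z$ and with the $V_{z}$'s pairwise disjoint. Consider the raw open subset of $\Omega$
\begin{equation*}
\mathscr{D}_{+}:=\big(\{f<\min_{\partial\Omega}f+c_{1}\}\cap\Omega\big)\cup\bigcup_{z\in\mathscr P_{\rm sp}}(V_{z}\cap\Omega),
\end{equation*}
and let $\mathscr{D}_{+}^{\rm con}$ be its connected component containing $x_{0}$. It contains $\mathscr{C}_{\rm min}$ (connected by Lemma \ref{le.Geo}) and each $V_{z}\cap\Omega$, since $V_{z}\cap\mathscr{C}_{\rm min}\neq\emptyset$ near $z$ (for $v_{d}<0$ small and $v'=0$ one has $\hat f=f(z)+\mu_{z}v_{d}<f(z)$). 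A standard partition-of-unity smoothing, writing $\mathscr{D}_{+}^{\rm con}$ locally as the sublevel set of a smooth defining function and perturbing that function slightly, using Sard's theorem to make $0$ a regular value, yields a $\mathcal C^{\infty}$ connected open set $\mathscr{C}_{+}\subset\Omega$ containing $\mathscr{C}_{\rm min}$, such that $\overline{\mathscr{C}_{+}}$ is a neighborhood in $\overline\Omega$ of each $\mathscr U^{\delta}_{z}$ and
$\mathscr{C}_{+}\subset\big(\{f<\min_{\partial\Omega}f+2c_{1}\}\cap\Omega\big)\cup\bigcup_{z}V_{z}$.

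Third, pick $c_{2}\in(0,c_{1})$ small and, proceeding similarly, smooth the raw set
\begin{equation*}
\mathscr{D}_{-}:=\mathscr{C}_{+}\setminus\Big(\{f\geq\min_{\partial\Omega}f+c_{2}\}\cup\bigcup_{z\in\mathscr P_{\rm sp}}\mathscr U^{\delta}_{z}\Big)
\end{equation*}
into a $\mathcal C^{\infty}$ connected open set $\mathscr{C}_{-}\subset\Omega$ satisfying $\overline{\mathscr{C}_{-}}\subset\mathscr{C}_{+}$ and
\begin{equation*}
\overline{\mathscr{C}_{+}}\setminus\mathscr{C}_{-}\subset \big(\{f\geq\min_{\partial\Omega}f+c_{2}/2\}\cap\overline{\mathscr{C}_{+}}\big)\cup\bigcup_{z\in\mathscr P_{\rm sp}}\mathscr U^{\delta}_{z}.
\end{equation*}
(That $\mathscr{D}_{-}$ is still connected near $x_{0}$ relies on the cylinders $\mathscr U^{\delta}_{z}$ being thin spikes near $\partial\Omega$, which follows from Section \ref{sec.Ec}.) Setting $\mathscr O:=(\overline{\mathscr{C}_{+}}\setminus\mathscr{C}_{-})\setminus\bigcup_{z}\mathscr U^{\delta}_{z}$, the three required properties now follow: $\mathfrak 1$ since $\overline{\mathscr{C}}_{\rm min}\setminus\partial\Omega\subset\mathscr{C}_{+}$ by construction; $\mathfrak 2$ by construction; and $\mathfrak 3$ since $\mathscr O\subset\{f\geq\min_{\partial\Omega}f+c_{2}/2\}$ gives \eqref{eq.SS}, while the strip-wide lower bound $f\geq f(x_{0})+c$ follows from \textbf{[C$^{\delta}_{2}$]} on the cylinders and from $\min_{\partial\Omega}f+c_{2}/2>f(x_{0})$ on $\mathscr O$.

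The main technical obstacle, I expect, is the smoothing step itself, particularly near each $z\in\mathscr P_{\rm sp}$ where the raw boundary switches between a level set of $f$ and the lateral wall of $V_{z}$ (or $\mathscr U^{\delta}_{z}$) across $\partial\Omega$. Since these two pieces meet transversally in the $v$-coordinates of Section \ref{sec:coord}, a careful partition-of-unity construction in those coordinates handles the smoothing without affecting the other properties; the bookkeeping in this transition zone is the most delicate step of the proof.
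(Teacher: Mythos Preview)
The paper does not give a detailed proof of this proposition: immediately before the statement it only records that $\nabla f\neq 0$ on $\partial\mathscr{C}_{\rm min}$, so that sublevel sets of $f$ are locally connected near $\partial\mathscr{C}_{\rm min}$, recalls conditions \textbf{[C$^{\delta}_{1}$]}--\textbf{[C$^{\delta}_{8}$]}, and then simply asserts that ``These considerations imply the existence of the following subsets $\mathscr{C}_{-}$ and $\mathscr{C}_{+}$.'' Your construction---thickening $\mathscr{C}_{\rm min}$ by an $f$-collar together with neighborhoods $V_{z}$ of the cylinders, then carving out the cylinders and a high-$f$ collar, and smoothing via Sard---is precisely the explicit version of what the paper leaves implicit, and it is correct.
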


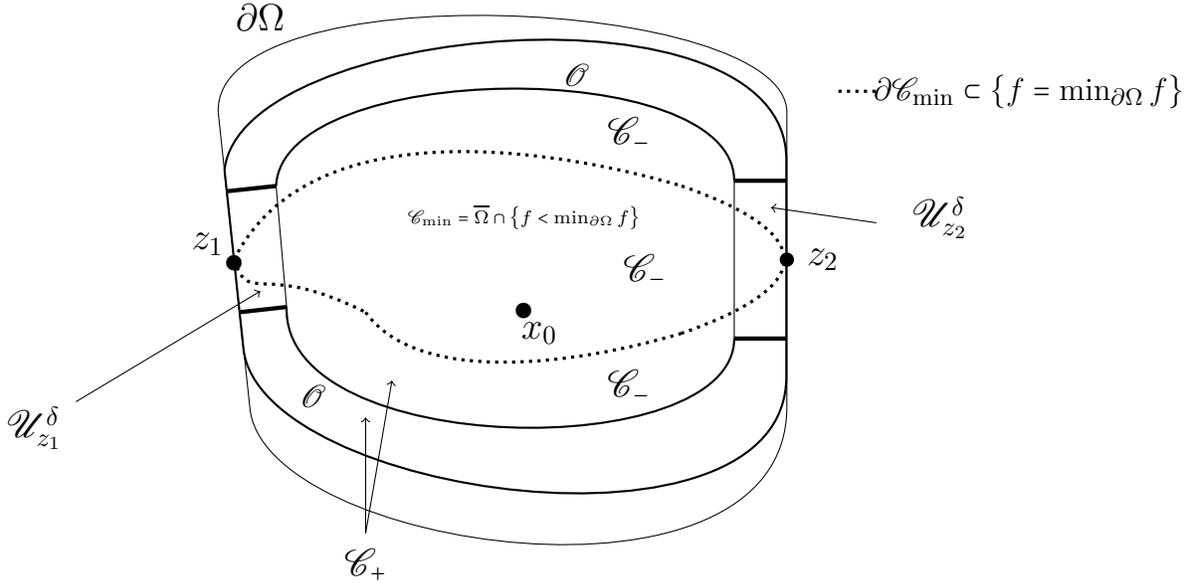
\begin{figure}[h!]
\begin{center}
\begin{tikzpicture}[scale=0.7]
\tikzstyle{vertex}=[draw,circle,fill=black,minimum size=5pt,inner sep=0pt]
\draw (-0.2,-2.8)--(-0.8,2.8);
 \draw[ultra thick]  (-0.63,1.3)--(0.3,1.4);
  \draw[ultra thick]  (-0.4,-1)--(0.5,-0.9); 
\draw  (0.3,1.4)--(0.5,-0.9);
 \draw (0,4.6) node[]{$\pa \Omega$}; 
    \draw (-0.5,-0.1)   node {\large{$\bullet$}} ; 
   \draw (5,-1)   node {\large{$\bullet$}} ;
  \draw (5.3,-1.4) node[]{$ x_0$};
    \draw (2,-5.8) node[]{$\mathscr{C}_{+}$};
    \draw [->] (2,-5.2) to (2,-3) ;
    \draw [->] (2,-5.2) to (2.5,-2.3) ;  
  \draw [dotted, very thick]   (10.98 ,3.2) -- (11.7 ,3.2);
 \draw  (14.6 ,3.2) node[]{{\small $\pa \mathscr{C}_{{\rm min}}\subset \{f= \min_{\pa \Omega}f\}$}}; 
     \draw (7,2.4) node[]{$\mathscr{C}_{-}$};
     \draw (7.3,-0.2) node[]{$\mathscr{C}_{-}$};
      \draw (7,-2.4) node[]{$\mathscr{C}_{-}$};
      \draw (5,0.8) node[]{{\tiny $\mathscr{C}_{{\rm min}}= \overline \Omega\cap\big\{ f<\min_{\pa \Omega}f \big\}$}}; 
  \draw (10,-2.8)--(10,2.8);
\draw[ultra thick]  (10,1.5)--(9,1.5);
\draw[ultra thick]  (10,-1.5)--(9,-1.5);
\draw (9,1.5)--(9,-1.5);  
  \draw (10,0)  node[vertex,label=east: {$z_2$}](v){};
  \draw[very thick, dotted]  (8,1.4) ..controls (10.6,0.5) and   (10.6,-0.5)  .. (8,-1.4) ; 
  \draw[very thick, dotted]  (-0.5,0) ..controls (-0.2,-0.9) and   (0,0)  .. (2,-1);
 \draw[very thick, dotted]  (-0.5,0) ..controls (1,3) and   (6,2)  .. (8,1.4) ;
\draw[very thick, dotted]  (2,-1) ..controls (3,-2.4) and   (6,-2)  .. (8,-1.4) ;
\draw[thick]  (0.3,1.4) ..controls (0.4,3.8) and   (9,3.9)  .. (9,1.5);
\draw[thick]  (0.5,-0.9) ..controls (0.4,-3.8) and   (9,-3.9)  .. (9,-1.5);
\draw[thick]  (-0.68,1.7) ..controls (-0.4,4.5) and   (10,5.4)  .. (9.99,1.9);
\draw[thick]  (-0.32,-1.7) ..controls (0,-4.5) and   (10,-5.9)  .. (9.99,-2.22);
\draw[thick]  (-0.32,-1.7)--(-0.68,1.7);
\draw[thick]  (9.99,-2.22)--(9.99,1.9);
\draw (1,-2.6) node[]{\small{$\mathscr O $}};
\draw (6,3.5) node[]{\small{$\mathscr O $}};
\draw (12.9,0.8) node[]{$\mathscr U^{\delta}_{z_2}$};
\draw [->] (11.7,0.7) to (9.7,0.99) ;
\draw (-4.3,-3.2) node[]{$\mathscr U^{\delta }_{z_1} $};
\draw [->] (-3.5,-2.7) to (0,-0.6) ;
 \draw (-1,0.3) node[]{${\small z_1}$};
\draw  (-0.8,2.8) ..controls (-0.9,5.5) and   (10,5)  .. (10,2.8);
\draw  (-0.2,-2.8) ..controls (0,-5.5) and   (10,-7)  .. (10,-2.8);
\end{tikzpicture}
\caption{Schematic representation of $\mathscr{C}_{-}$, $\mathscr{C}_{+}$, and $\mathscr O$ (see Proposition~\ref{pr.omega1}). On the figure, $\mathscr  P_{\rm sp}=\{z_1,z_2\}$. }
 \label{fig:S}
 \end{center}
\end{figure}

We refer to Figure~\ref{fig:S} for a schematic representation of  $\mathscr{C}_{-}$, $\mathscr{C}_{+}$, and $\mathscr O$. 
Note that  $x_0 \in \mathscr{C}_{-}$ and hence,  
\begin{equation}\label{eq.minC}
\argmin \limits_{\,  \overline{\mathscr{C}}_{+}}f=\argmin\limits_{\,  \overline{\mathscr{C}}_{-}}f=\{x_0\}.
\end{equation}

\noindent
Using the above sets $\mathscr{C}_{ + }$ and $ \mathscr{C}_{-}$, we define  a
function 
\bes w_h^{\rm{app}}:\overline \Omega \to [0,1]
\ees  as follows: 
\begin{enumerate}
\item[]{\rm\textbf{[-]}} For every $z\in \mathscr  P_{\rm sp}$, $w_h^{\rm{app}}$  is defined on the cylinder $\mathscr U^{\delta}_z$ 
(see \eqref{eq.vois-11-pc})
by
 \begin{equation}\label{eq.qm-local1x}
\forall x\in   \mathscr U^{\delta}_z, \  w_h^{\rm{app}}(x):= \varphi_{z} (v(x)),  \  
\text{ see~\eqref{eq.qm-local11}}.
  \end{equation}
\item[]{\rm\textbf{[-]}} From~\eqref{eq.qm-local1-property12}, \eqref{stripS},  and  the fact that $ \overline{\mathscr{C}}_{ - }\subset  \mathscr{C}_{ + }$  (see Proposition~\ref{pr.omega1}),    
the above function $w_h^{\rm{app}}$ satisfying \eqref{eq.qm-local1x}   can be extended to $\overline \Omega$  so  that
\begin{equation}\label{eq.psix=10}
w_h^{\rm{app}}=0 \text{ on } \overline \Omega\setminus\mathscr{C}_{ + }, \ \ \ w_h^{\rm{app}}=1 \text{ on }  {\mathscr{C}_{ - }}, \text{ and } w_h^{\rm{app}}\in C^\infty(\overline \Omega,[0,1]). 
\end{equation}
Note that $w_h^{\rm{app}}=0$ on $\partial \Omega$. 
Moreover, in view of \eqref{eq.qm-local11} and~\eqref{stripS},
 $w_h^{\rm{app}}$ can be chosen on~$\mathscr O$ such that, for some $C>0$ and   for  every $h$ small enough,  
\begin{equation} 
\label{eq.nablapsix=0}
\forall \alpha \in \mathbb N^d, \, \vert \alpha \vert \in \{1,2\}, \,    \Vert \pa^\alpha  w_h^{\rm{app}} \Vert_{L^\infty(\mathscr O )} \le Ch^{-2}. 
\end{equation}

\end{enumerate}
Notice that~\eqref{eq.psix=10} implies
\begin{equation}\label{eq.psix=nabla}
\supp \nabla  w_h^{\rm{app}} \subset \overline{\mathscr{C}}_{ + }\setminus \mathscr{C}_{ - }.
\end{equation}
 
We finally define the normalized quasi-mode $ u_h^{\rm{app}}$ over $\overline \Omega$ by
\be\label{eq:defnormquasi-mode}
 u_h^{\rm{app}}=   w_h^{\rm{app}}/ Z^{\rm{app}}_h \  \text{ where }   Z^{\rm{app}}_h=\Big(\int_\Omega  |w_h^{\rm{app}}|^2p_h(x)dx\Big)^{\frac 12}.
 \ee

\section{Proof of Theorem \ref{th:sharpeigenval}}

In all this section, we assume \textbf{[A]}.

\subsection{Action of $L_h$ on the quasi-mode}
\label{sec.LAL}
 
 \begin{proposition}\label{prop:calculQM}  Assume {\rm \textbf{[A]}}. There exist $h_0>0$ and a family of real numbers $(\zeta(h))_{h\in]0,h_0]}$ admitting a $h$-classical expansion $\zeta(h)\sim\sum_{j\geq 0}h^j\zeta_j$ with $\zeta_0$ given by \eqref{eq:defzeta0} such that, defining  
 \bes
  \lambda_h^{\rm{app}}=h^{-\frac 12}\zeta(h)e^{-\frac 2h(\min_{\partial\Omega}f-f(x_0))},
 \ees the following holds true:
\begin{enumerate}
\item[]{\rm [$\mathfrak 1$]} $\langle L_hu_h^{\rm{app}},u_h^{\rm{app}} \rangle_{L^2(p_hdx)}  =     \lambda_h^{\rm{app}}  (1+O(h^\infty))$
\item[]{\rm [$\mathfrak 2$]} $\Vert L_h  u_h^{\rm{app}}  \Vert^2_{L^2(p_hdx)}= O(h^\infty ) \,   \lambda_h^{\rm{app}}$.   
\item[]{\rm [$\mathfrak 3$]} $\Vert L_h^\dag  u_h^{\rm{app}}  \Vert^2_{L^2(p_hdx)}= O( h^{-1} \lambda_h^{\rm{app}})$, where $L_h^\dag$ is the adjoint of the operator $L_h$ in $L^2(\Omega, p_hdx)$. 
\end{enumerate} 
 \end{proposition}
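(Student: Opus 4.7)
The plan is to compute each quantity in Items [$\mathfrak 1$]--[$\mathfrak 3$] by localizing to the cylinders $\mathscr U^\delta_z$, $z\in \mathscr P_{\rm sp}$, and performing a Laplace expansion with boundary critical point at $v=0$. The starting observation for [$\mathfrak 1$] is that $u_h^{\rm app} \in H_0^1$, so the Green-type identity \eqref{eq.quadra} extends by density and yields
\begin{equation*}
\langle L_hu_h^{\rm app},u_h^{\rm app}\rangle_{L^2(p_hdx)} = \frac{h}{2(Z_h^{\rm app})^2} \int_\Omega |\nabla w_h^{\rm app}|^2\, p_h\, dx.
\end{equation*}
Thanks to \eqref{eq.psix=nabla} and \eqref{stripS}, $\supp \nabla w_h^{\rm app}$ is contained in $\bigcup_z \mathscr U^\delta_z \cup \mathscr O$; on $\mathscr O$, the lower bound \eqref{eq.SS} and the polynomial bound \eqref{eq.nablapsix=0} on $|\nabla w_h^{\rm app}|$ force the contribution to be exponentially smaller than $e^{-\frac 2h(\min_{\pa \Omega}f - f(x_0))}$, hence negligible at any polynomial order in $h$. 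Similarly, $(Z_h^{\rm app})^2 = 1 + O(h^\infty)$: since $w_h^{\rm app} \equiv 1$ on $\mathscr C_-$ where $p_h$ concentrates at $x_0$ (note that $\int_{\mathbf R^d} p_h = 1$ and $f>f(x_0)+c$ outside $\mathscr C_-$), the normalization is essentially trivial up to $O(h^\infty)$.

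On each $\mathscr U^\delta_z$, switch to $v$-coordinates, use \eqref{eq.cv-pa-omega-nablafnon03} to write $\hat f(v) = f(z) + Q(v)$, and insert the expansion $\hat p_h \sim h^{-d/2}R^h e^{-\frac 2h(\hat f - f(x_0))}$ from Theorem~\ref{th.sheu}. Since $\nabla \varphi_z = -\chi(\xi^z)e^{\xi^z/h}\nabla\xi^z/N_{z,h}$, the contribution of $z$ to $\frac h2 \int |\nabla w_h^{\rm app}|^2 p_h$ equals
\begin{equation*}
\frac{h\, e^{-\frac 2h(f(z) - f(x_0))}}{2N^2_{z,h}}\int_{\mathscr V^\delta_z}\chi(\xi^z)^2 \bigl(G^{-1}\nabla\xi^z\!\cdot\!\nabla\xi^z\bigr)\,h^{-d/2}R^h\sqrt{|G|}\,e^{-\frac 2h(Q-\xi^z)}\,dv.
\end{equation*}
By \textbf{[C$^\delta_4$]}, the phase $Q-\xi_0^z$ attains its unique minimum at $v=0$, which is a boundary point of $\mathscr V^\delta_z$. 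Since $\partial_{v_d}(Q-\xi_0^z)(0) = -\mu_z < 0$ and $\nabla^2_{v'}(Q-\xi_0^z)(0) = H_z$ is positive definite, the Laplace method with boundary extremum applies: the $v'$ integration produces $(\pi h)^{(d-1)/2}/\sqrt{\det H_z}\cdot(1+O(h))$, the $v_d$ integration produces $\frac{h}{2\mu_z}(1+O(h))$, and the leading prefactor of the integrand at $v=0$ is $|\nabla\xi_0^z(0)|^2 R_0(z) = 4\mu_z^2 R_0(z)$. Combining these with $N_{z,h}^2 = h^2(1+O(h^\infty))$, summing over $z\in\mathscr P_{\rm sp}$, and using the formulas \eqref{eq.R0} and \eqref{eq:calc0} for $R_0(z)$ and $c_0$, the leading coefficient matches exactly $\zeta_0$ of \eqref{eq:defzeta0}. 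The full $h$-classical expansion of $\zeta(h)$ then follows from the $h$-classical expansions of $R^h$ and $\xi^z$ combined with the classical expansion of boundary Laplace integrals.

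For [$\mathfrak 2$], Proposition~\ref{prop:definQMbord} combined with \eqref{eq:acthatLhQM} gives $\hat L_h\varphi_z = \frac{\chi(\xi^z)e^{\xi^z/h}}{N_{z,h}} O(h^\infty) + \frac{h}{2N_{z,h}} (G^{-1}\nabla\xi^z\cdot\nabla\xi^z)\chi'(\xi^z)e^{\xi^z/h}$; by \textbf{[C$^\delta_8$]}, on the support of $\chi'(\xi^z)$ the phase $Q-\xi^z$ is bounded below by $\mu_z\delta_1/10$, so this term multiplied by $p_h$ is exponentially smaller than $\lambda_h^{\rm app}$, while the $O(h^\infty)$ term leads, after squaring and applying the same boundary Laplace computation, to $O(h^\infty)\lambda_h^{\rm app}$. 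For [$\mathfrak 3$], using $L_h^\star p_h=0$ one derives the explicit form $L_h^\dag v = -\frac h2\Delta v + (b - h\nabla\log p_h)\cdot \nabla v$, and Theorem~\ref{th.sheu} (in its $C^{k}$-form) gives $b - h\nabla\log p_h = (\nabla f-\ell) + O(h)$ uniformly on compacts. Hence $|L_h^\dag \varphi_z| \le Ch^{-1}\chi(\xi^z)e^{\xi^z/h}$ on $\mathscr U^\delta_z$ (using $|\nabla\varphi_z|, h|D^2\varphi_z| \le C h^{-1}\chi(\xi^z)e^{\xi^z/h}$), so squaring and integrating against $p_h$ produces $O(h^{-2})\cdot h^{1/2}e^{-\frac 2h(\min_{\pa\Omega}f-f(x_0))} = O(h^{-1}\lambda_h^{\rm app})$.

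The main obstacle is to carry out the boundary Laplace expansion in [$\mathfrak 1$] to the full $h$-classical order while managing the error terms in the asymptotic of $R^h$: precisely because the integrand involves $\nabla \xi^z$ and derivatives of cut-offs, one cannot do without the $C^{k}$-version of Theorem~\ref{th.sheu} with controlled derivatives of the error. A secondary bookkeeping obstacle is checking that the extraneous contributions (from $\mathscr O$, from $\chi'(\xi^z)$, and from the $O(h^\infty)$ remainder coming from the construction of $\xi^z$) are genuinely of lower order, which is ensured by the cumulative effect of the conditions \textbf{[C$^\delta_1$]}--\textbf{[C$^\delta_8$]}.
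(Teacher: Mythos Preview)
Your proposal is correct and follows essentially the same route as the paper: the Green identity \eqref{eq.quadra} for [$\mathfrak 1$], localization to the cylinders $\mathscr U^\delta_z$ via \eqref{stripS}--\eqref{eq.SS}, a boundary Laplace expansion in the $v$-coordinates with phase $Q-\xi_0^z$, and the explicit formula \eqref{eq:formLhdag} for $L_h^\dag$ in [$\mathfrak 3$]. The only cosmetic difference is that the paper carries out the full $h$-classical expansion by an explicit Taylor expansion and rescaling $v_h=(\sqrt h v',hv_d)$, whereas you invoke the boundary Laplace method as a black box; and one small slip: your bound $h|D^2\varphi_z|\le Ch^{-1}\chi(\xi^z)e^{\xi^z/h}$ misses a $\chi'(\xi^z)$ contribution, which however is harmless by \textbf{[C$^\delta_8$]} exactly as you argued in [$\mathfrak 2$].
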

 
 \begin{proof}
 In what follows, $C,c>0$ are  constants independent of $h>0$ and of $x\in \overline \Omega$ which can change from one occurence to another.
 We start by estimating $Z^{\rm{app}}_h$ in the limit $h\to 0$. 
 \medskip
 
 \noindent
  \textbf{Asymptotic equivalent of $Z^{\rm{app}}_h$}. 
Recall that $u_h^{\rm{app}}= w_h^{\rm{app}}/Z^{\rm{app}}_h$, see \eqref{eq:defnormquasi-mode}.
From   \eqref{eq.psix=10}, we have
\begin{align*}|Z^{\rm{app}}_h|^2&=\int_\Omega |w_h^{\rm{app}}(x)|^2 p_h(x)dx
=\int_{\mathscr{C}_{ - }}|w_h^{\rm{app}}(x)|^2 p_h(x)dx+\int_{\mathscr{C}_{ + }\setminus\mathscr{C}_{ - }}|w_h^{\rm{app}}(x)|^2 p_h(x)dx.
\end{align*}
Recall that from Theorem \ref{th.sheu},  $p_h(x)=h^{-\frac d2}R^h(x)e^{-\frac 2h (f(x)-f(x_0))}$, where
$R^h$ admits a $h$-classical expansion 
$R^h\sim\sum_{j\geq 0} h^jR_j$ on $\R^{d}$.
In particular $R^h$ is uniformly bounded on the compact 
$\overline\Omega\subset\R^{d}$. Since  $f\ge f(x_0)+c$ on $\mathscr{C}_{ + }\setminus\mathscr{C}_{ - }$ (see~\eqref{stripS})  and $|w_h^{\rm{app}}|\le 1$, it follows that
\bes
\int_{\mathscr{C}_{ + }\setminus\mathscr{C}_{ - }}|w_h^{\rm{app}}(x)|^2 p_h(x)dx=O(e^{-\frac {c}h }).
\ees
 Moreover,  using the fact that  $w_h^{\rm{app}} = 1$ on $\mathscr{C}_{ - }$, and Lemma 1.2 of \cite{sheu1986}, we get 
 \bes
 \int_{\mathscr{C}_{ - }}|w_h^{\rm{app}}(x)|^2 p_h(x)dx=\int_{\mathscr{C}_{ - }} p_h(x)dx=\int_{\R^d}p_h(x)dx+O(e^{-\frac {c}h })
 =1+O(e^{-\frac {c}h }).
 \ees
We thus obtain that in the limit $h\to 0$, 
 \be\label{eq:estimAah}
Z^{\rm{app}}_h=1+O(e^{-\frac ch}).
 \ee

 \noindent
 \textbf{Proof of} {\rm [$\mathfrak 1$]}.  
 Note that $w_h^{\rm{app}}\in \mathscr D_i$, for all $i=1,2,3$. Then, from \eqref{eq.quadra}, and the fact that $w_h^{\rm{app}}$ is supported in $\overline{\mathscr{C}}_{+}$ and equal to $1$ on $\mathscr{C}_{ - }$, we have   \bes
 \langle L_h w_h^{\rm{app}},w_h^{\rm{app}}\rangle_{L^2(p_hdx)}=\frac h2 \int_\Omega |\nabla w_h^{\rm{app}}(x)|^2 \, p_h(x)\, dx
 =\frac h2 \int_{\mathscr{C}_{ + }\setminus \mathscr{C}_{ - }} |\nabla w_h^{\rm{app}}(x)|^2 \, p_h(x)\, dx.
\ees   
   Using \eqref{stripS}, this implies that 
   \bes
   \langle L_hw_h^{\rm{app}},w_h^{\rm{app}}\rangle_{L^2(p_hdx)}=\frac h2\int_ {\mathscr O}|\nabla w_h^{\rm{app}}(x)|^2 \, p_h(x)\, dx+\frac h2\sum_{z\in \mathscr  P_{\rm sp}} \int_{ \mathscr U^{\delta}_z}|\nabla w_h^{\rm{app}}(x)|^2 \, p_h(x)\, dx.
   \ees
Moreover, thanks to Theorem \ref{th.sheu} and \eqref{eq.SS},  for all $x\in \mathscr O$,  $p_h(x)\leq C e^{-2(\min_{\partial\Omega}f-f(x_0)+c)/h}$ and thanks to \eqref{eq.nablapsix=0}, one deduces that
      \be\label{eq:prodscal1}
   \langle L_hw_h^{\rm{app}},w_h^{\rm{app}}\rangle_{L^2(p_hdx)}=\frac h2\sum_{z\in \mathscr  P_{\rm sp}}  \boldsymbol \vartheta_{z,h}+O(e^{-\frac 2h ( \min_{\partial\Omega}f-f(x_0)+c))}),
   \ee
 where
$$
  \boldsymbol \vartheta_{z,h}:= \int_{ \mathscr U^{\delta}_z}|\nabla w_h^{\rm{app}}(x)|^2 \, p_h(x)\, dx.
$$
   We now estimate each integral $  \boldsymbol \vartheta_{z,h}$ when $h\to 0$. To this end, fix  $z$ in $\mathscr  P_{\rm sp}$. Recall the coordinates $x\mapsto v(x)$ defined over $\mathscr U_z$ in Section~\ref{sec:coord}, 
see \eqref{eq.cv-pa-omega-nablafnon0}--\eqref{eq.G1}. We also recall that  $\hat f=f\circ v^{-1}$ (see \eqref{eq.cv-pa-omega-nablafnon03}), $  {\hat \ell} ={\ell}\circ v^{-1}$, and that for any smooth function $u$, we have $ (\nabla u )(v^{-1}) =  {}^tJ^{-1}    \nabla\hat  u$.  Using the $v$-coordinates, together with \eqref{eq:defVz} and \eqref{eq.qm-local1x},  one has  
$$
  \boldsymbol \vartheta_{z,h}  =
 \int_{ \mathscr V^{\delta}_z}(G^{-1}\nabla \varphi_z)\cdot \nabla \varphi_z \  
\hat p_h\,\sqrt{|G|}(v) dv.
$$
 Let $\xi^z$ be given by Proposition \ref{prop:definQMbord}. 
Recall that by \eqref{eq.qm-local11}, one has for all $v\in \mathscr V^\delta_z$, 
\bes
\nabla \varphi_z(v)=-\frac1{N_{z,h}}\chi(\xi^z(v,h))e^{\xi^z(v,h)/h}\nabla\xi^z(v,h).
\ees 
Using \eqref{eq.Rhh}, we then obtain that:
\begin{align*}
  \boldsymbol \vartheta_{z,h}&=\frac{h^{-\frac d2}}{|N_{z,h}|^2} \int_{ \mathscr V^{\delta}_z}G^{-1}\nabla \xi^z(v,h)\cdot \nabla \xi^z(v,h)|\chi( \xi^z(v,h))|^2\\\
&\quad \times  \hat R^h (v)
e^{-\frac 2h (\hat f(v)-f(x_0)-\xi^z(v,h))}
\sqrt{|G|}(v) dv.
\end{align*}
 Moreover, using \eqref{eq.cv-pa-omega-nablafnon03} and the fact that $f(z)=\min_{\partial\Omega}f$, one has 
$\hat f(v)-f(x_0)=\min_{\partial\Omega}f-f(x_0)+Q(v)$, see \eqref{eq:deftildef}.  
Then, one has
\begin{align*}
&{\textstyle \frac{|N_{z,h}|^2}{h^{-\frac d2}}\, e^{\frac 2h(\min_{\partial\Omega}f-f(x_0))} }  \boldsymbol \vartheta_{z,h}\\
&=   \int_{ \mathscr V^{\delta}_z}{\textstyle G^{-1}\nabla \xi^z(v,h)\cdot \nabla \xi^z(v,h)|\chi( \xi^z(v,h))|^2\, \hat R^h (v)
e^{-\frac 2h (Q(v)-\xi^z(v,h))}}
\sqrt{|G|}(v) dv.
\end{align*}
Recall that 
$\xi^z(v,h)\sim\sum_{j\geq 0}h^j\xi^z_j$ and $R^h(v)\sim\sum_{j\geq 0}h^jR_j(v)$, see respectively Proposition \ref{prop:definQMbord} and Theorem \ref{th.sheu}. Recall also that
according to Propositions~\ref{prop:solveik} and~\ref{prop:solvtranspj},
 $\xi_0^z$ solves the eikonal equation \eqref{eq:eik}  and $\xi^z_1$  solves the first transport equation \eqref{eq:transp_j}. 
We deduce that  for any $N\in \N$,  
\be\label{eq:expandclassI}
\frac{|N_{z,h}|^2}{h^{-\frac d2}}\,   e^{\frac 2h(\min_{\partial\Omega}f-f(x_0))} \,   \boldsymbol \vartheta_{z,h}= \sum_{j=0}^N h^j   \boldsymbol \vartheta_{z,h}^{(j)} \ +\ O(h^{N+1}),
\ee 
where  $(  \boldsymbol \vartheta_{z,h}^{(j)})_{j\ge 0}\subset \mathbf R$ and 
\bes
  \boldsymbol \vartheta_{z,h}^{(0)}= \int_{ \mathscr V^{\delta}_z}G^{-1}\nabla \xi_0^z(v)\cdot \nabla \xi_0^z(v)\hat R_0(v)|\chi(\xi_0^z(v))|^2
e^{2\xi^z_1(v)}
e^{-\frac 2h (Q(v)-\xi_0^z(v))}
\sqrt{|G|}(v)dv.
\ees
Note that we have used \textbf{[C$^\delta_4$]} in see Section \ref{sec.Ec}, and more precisely that $Q(v)-\xi_0^z(v)\ge 0$, 
to obtain the remainder term  $O(h^{N+1})$ in \eqref{eq:expandclassI}.

Recall \eqref{eq:phaseapp}. Using the condition \textbf{[C$^\delta_5$]} (see Section \ref{sec.Ec}), we deduce that 
\begin{align}
\nonumber
  \boldsymbol \vartheta_{z,h}^{(0)}&=  \int_{ \mathscr V^{\delta}_z}G^{-1}\nabla \xi_0^z(v)\cdot \nabla \xi_0^z(v)\ \hat R_0(v)\ e^{2\xi^z_1(v)}
e^{-\frac 2h  ( Q^+(v)-v_d \mathscr A(v))}
\sqrt{|G|}(v)dv \\
\label{eq:formI0}
 &\qquad +O(e^{-\frac ch}).
\end{align}
By Propositions \ref{prop:solveik} and \ref{prop:solvtranspj}, one has  $|\nabla \xi_0^z(0)|^2=4 |\mu_z|^2$  and $\xi^z_1(0)=0$. 
Then, performing a Taylor expansion, we deduce  from \eqref{eq.R0} and \eqref{eq.G1}    that there exists a  sequence  $(\boldsymbol \theta_\alpha)_{\alpha\in\N^d}\subset \mathbf R$ such that for any $K\in\N$:
\be\label{eq:taylorI0}
\sqrt{|G|}(v)\, G^{-1}(v)\nabla \xi_0^z(v)\cdot \nabla \xi_0^z(v)\hat R_0(v)
\, e^{2\xi^z_1(v)}=\sum_{|\alpha|\leq K}\boldsymbol \theta_\alpha v^\alpha+O(|v|^{K+1}),
\ee
where 
\be\label{eq:computtheta0}
\boldsymbol \theta_0=4|\mu_z|^2c_0\exp\Big[\int_ 0^{+\infty} \div (\ell )(\psi_t(z))dt\Big]
\ee
and  $c_0>0$ is given by \eqref{eq:calc0}. 
For $\alpha \in \N^d$, we write $\alpha=(\alpha',\alpha_d)$ with $\alpha'\in \N^{d-1}$ and $\alpha_d\in \N$. Set $v_h=(\sqrt h v',hv_d)$.  
Then, combining 
\eqref{eq:formI0} and   \eqref{eq:taylorI0}, it follows that for any $K\in \N$, using a change of variables, 
\bes
\begin{split}
  \boldsymbol \vartheta_{z,h}^{(0)}&= \sum_{|\alpha|\leq 2K}\boldsymbol \theta_\alpha  \int_{ \mathscr V^{\delta}_z} v^\alpha e^{-\frac 2h ( Q^+(v)-v_d \mathscr A(v))} dv + \int_{ \mathscr V^{\delta}_z} O(|v|^{2K+1})e^{-\frac 2h ( Q^+(v)-v_d \mathscr A(v))}  dv\\
&= h^{\frac{d+1}{2}} \Big[\sum_{\frac {|\alpha'|} 2+\alpha_d\leq K} \boldsymbol \theta_\alpha h^{\frac {|\alpha'|} 2+\alpha_d} \int_{\vert v'\vert \le \delta_2/\sqrt h}\int_{-2\delta_1/h}^0 v^\alpha e^{-2(Q^+(v)-v_d\mathscr A(v_h))} dv +O(h^{K+\frac12}) \Big].
\end{split}
\ees  
Recall that $\mathscr A(0)=0$. 
  Hence,  performing a second Taylor expansion
 and denoting, for $k\in\N$, by $(\mathbf v_{\beta})_{\beta\in\N^{d},|\beta|=k}$ the monomial basis of the homogeneous polynomials of order $k$, 
 there exists a family $(\lambda_{\beta})_{\beta\in\N^{d}}\subset \mathbf R$ such that,
 for every $K\in\N$,
\begin{align*}
2v_d\mathscr A(v_h)&= \sum_{|\beta|\ge 1, \, \frac {|\beta'|} 2+\beta_d \le K} h^{\frac {|\beta'|} 2+\beta_d} \, 2\lambda_\beta v_d \mathbf v_{\beta}(v)  +  O(h^{K+ \frac{1}2} ).
\end{align*}
Thus,  it holds
 $$e^{2v_d\mathscr A(v_h)}= 1+ \sum_{|\gamma|\ge 1,    {|\gamma'|} /2+ \gamma_d \le K}     h^{\frac {|\gamma'|} 2+ \gamma_d}  \mathscr A_\gamma(v) +O(h^{K+\frac 12}),$$
  where each $\mathscr A_\gamma(v)$ is a linear combination of   monomials of the form $ v_d^m   \mathbf v_{\beta_1} (v)$ $\ldots \mathbf v_{\beta_m}(v)$ with $m\ge 1$ and   where  
  $$\gamma=(\beta_1'+\ldots+\beta_m', (\beta_1)_d+\ldots+(\beta_m)_d)\in  \mathbf N^d\ \ \text{and}\ \ 
  \frac 12{|\gamma'|}  + \gamma_d = \frac 12 \sum_{i=1}^m |\beta_i'| + \sum_{i=1}^m  (\beta_i)_d \ge \frac12.$$
Therefore, there exist   coefficients $  \boldsymbol \theta^*_\alpha$ such that  
 $  \boldsymbol \theta^*_0=   \boldsymbol \theta_0$ and
\bes
\begin{split}
  \boldsymbol \vartheta_{z,h}^{(0)}&=h^{\frac{d+1}{2}} \Big[\sum_{\frac {|\alpha'|} 2+\alpha_d\leq K}  \boldsymbol \theta^*_\alpha h^{\frac {|\alpha'|} 2+\alpha_d} \int_{\vert v'\vert \le \delta_2/\sqrt h}\int_{-2\delta_1/h}^0v^\alpha P_\alpha(v_d)e^{-2Q^+(v)} dv+O(h^{K+\frac12})\Big]\\
&=h^{\frac{d+1}{2}} \Big[\sum_{\frac {|\alpha'|} 2+\alpha_d\leq K} \boldsymbol \theta^*_\alpha h^{\frac {|\alpha'|} 2+\alpha_d} \int_{v'\in\R^{d-1}}\int_{v_d<0} v^{ \alpha} P_\alpha(v_d)e^{-2Q^+(v)} dv+O(h^{K+\frac12})\Big],
\end{split}
\ees
where for $\alpha \in \mathbf N^d$,  $P_\alpha\in \mathbf R[X]$. 
  For a parity reason,  if $\alpha'$ is an odd number, $\int_{v'\in\R^{d-1}} v^{\alpha} e^{-2Q^+(v)} dv'=0$. Hence, we deduce that as $h\to 0$,
 $$  \boldsymbol \vartheta_{z,h}^{(0)}\sim h^{\frac{d+1}{2}}\sum_{k\geq 0}h^k K^{(0)}_{z,k}$$ 
 for some sequence  $(K^{(0)}_{z,k})_{k\ge 0}\subset \mathbf R$
such that
\bes
K^{(0)}_{z,0}=\boldsymbol \theta_0\int_{\R^{d-1}\times \R^-}e^{2\mu_z v_d-H_zv'\cdot v'}dv=\frac{\boldsymbol \theta_0\pi^{\frac {d-1} 2}}{2\mu_z\sqrt{\det H_z}}.
\ees
Combined with  \eqref{eq:computtheta0} and \eqref{eq:calc0}, this gives
$$
K^{(0)}_{z,0}=\frac{2\mu_z}{\sqrt \pi}\frac {\sqrt{\det\Hess(f)(x_0)}}{\sqrt{\det H_z}} \exp\Big[\int_ 0^{+\infty} \div (\ell )(\psi_t(z))dt\Big].
$$
Similar arguments show that for any $j\geq1$, there exists a sequence  $(K^{(j)}_{z,k})_{k\ge 0}\subset \mathbf R$ such that
$$  \boldsymbol \vartheta_{z,h}^{(j)}\sim h^{\frac{d+1}{2}}\sum_{k\geq 0}h^k K^{(j)}_{z,k}.$$
Using \eqref{eq:prodscal1}, \eqref{eq:expandclassI}, and \eqref{eq:estimNhz}, we finally obtain
\bes
\langle L_hw_h^{\rm{app}},w_h^{\rm{app}}\rangle_{L^2(p_hdx)}\sim \frac{e^{-\frac 2h(\min_{\partial\Omega}f-f(x_0))}}{\sqrt h}\sum_{j\geq 0}h^j\zeta_j(z) \text{ \ \ with \ \ $\zeta_0(z)=\sum_{z\in \mathscr  P_{\rm sp}}\frac 12 K^{(0)}_{z,0}$.}
\ees
  Moreover since $u_h^{\rm{app}}=\frac {w_h^{\rm{app}}}{Z^{\rm{app}}_h}$ and $Z^{\rm{app}}_h=1+O(e^{-\frac ch})$ (see \eqref{eq:estimAah}), we deduce that 
\bes
\langle L_hu_h^{\rm{app}},u_h^{\rm{app}}\rangle_{L^2(p_hdx)}\sim \frac{e^{-\frac 2h(\min_{\partial\Omega}f-f(x_0))}}{\sqrt h}\sum_{j\geq 0}h^j\zeta_j(z). 
\ees
This  completes the proof of {\rm [$\mathfrak 1$]}.  
 \medskip

 \noindent
 \textbf{Proof of} {\rm [$\mathfrak 2$]}.  Since $w_h^{\rm{app}}$ is supported in $\overline{\mathscr{C}}_+$, equal to $1$ in $\mathscr{C}_-$ and since $L_h$ has no zero order term, then 
 \bes
 \Vert L_hw_h^{\rm{app}}\Vert_{L^2(p_h dx)}^2=\int_{\mathscr{C}_+\setminus \mathscr{C}_-} |L_hw_h^{\rm{app}}(x)|^2 p_h(x)dx.
 \ees
Since $\overline{\mathscr{C}}_{+}\setminus \mathscr{C}_{-}= 
 \bigcup \limits_{z\in\mathscr  P_{\rm sp}}  \mathscr U^{\delta}_z   \  \bigcup \   \mathscr O$, thanks to \eqref{eq.nablapsix=0}, \eqref{eq.SS} and Theorem \ref{th.sheu}, we get
 \be\label{eq:computLhwa1}
 \Vert L_hw_h^{\rm{app}}\Vert_{L^2(p_h dx)}^2=\sum_{z\in\mathscr  P_{\rm sp}}
 \int_{\mathscr U^{\delta}_z} |L_hw_h^{\rm{app}}(x)|^2 p_h(x)dx+O(e^{-\frac 2h(\min_{\partial\Omega}f-f(x_0)+c)}).
 \ee
 Using the $v$-coordinates, it follows from \eqref{eq.qm-local1x} that
 \be\label{eq:defJz}
J_{z,h}:= \int_{\mathscr U^{\delta}_z} |L_hw_h^{\rm{app}}(x)|^2 p_h(x)dx=\int_{\mathscr V^{\delta}_z}|\hat L_h\varphi_z(v)|^2\hat p_h(v)\sqrt{|G|}(v) dv.
 \ee
 On the other hand,  recall \eqref{eq:acthatLhQM}:
 \begin{align*}
 \hat L_h \varphi_z&=\frac {\chi(\xi^z)e^{\xi^z/h}}{N_{z,h}}\Big[\frac 12(G^{-1}\nabla\xi^z)\cdot \nabla\xi^z +\big[b^\circ\cdot\nabla\xi^z+h\big (\rho^\circ \cdot\nabla\xi^z+ \frac 12\div(G^{-1}\nabla\xi^z)\big ) \big ]  \Big]\\
&\quad  +\frac h{2 N_{z,h}} (G^{-1}\nabla\xi^z)\cdot\nabla\xi^z \, \chi'(\xi^z)e^{\xi^z/h}.
 \end{align*}
 Recall also that $\hat f=f(z)+Q$ (see \eqref{eq.cv-pa-omega-nablafnon03}).
Then, using Theorem~\ref{th.sheu} and  Proposition~\ref{prop:definQMbord}, one deduces that (see \eqref{eq:deftildef})
\begin{align*}
 J_{z,h}&=\frac {h^{-\frac d 2}e^{-\frac 2h(\min\limits_{\partial\Omega}f-f(x_0))}}{|N_{z,h}|^2}
 \int_{\mathscr V^{\delta}_z}\big|O(h^\infty )+ O(h) \chi'(\xi^z)\big|^{2}\hat R^h (v) e^{-\frac 2h (Q(v)-\xi^z(v,h))}dv
\end{align*} 
 Moreover, thanks to \textbf{[C$^\delta_8$]} in Section \ref{sec.Ec},  there exists $c>0$ such that 
 $Q(v)-\xi^z(v,h)\geq c$ for every  $v\in \mathscr V^{\delta}_z$ satisfying $\xi^z(v,h)\in\supp(\chi')$. Hence, we have
\begin{align*}
 J_{z,h}&=\frac {h^{-\frac d 2}e^{-\frac 2h(\min\limits_{\partial\Omega}f-f(x_0))}}{|N_{z,h}|^2}\int_{\mathscr V^{\delta}_z}O(h^\infty)\hat R^h (v) e^{-\frac 2h (Q(v)-\xi^z(v,h)) }dv
 +O(h^\infty e^{-\frac 2h(\min\limits_{\partial\Omega}f-f(x_0))}))\\
 &=O(h^\infty e^{-\frac 2h(\min\limits_{\partial\Omega}f-f(x_0))})).
\end{align*}
 In view of  
 \eqref{eq:computLhwa1} and \eqref{eq:defJz}, this yields the desired result, namely {\rm [$\mathfrak 2$]}.
 \medskip

 \noindent
 \textbf{Proof of} {\rm [$\mathfrak 3$]}.  We start with the computation of the adjoint $L_h^\dag $ of $L_h$ in $L^2(p_hdx)$. Set  
 $$\psi_h=-h\ln(p_h).$$
 Recall that the function $\psi_h$ is well-defined and smooth. Note that 
 $\nabla \psi_h= (-h\nabla R^h+2 R^h \nabla f)/ R^h$.
Hence, according to Theorem \ref{th.sheu} and formula \eqref{eq.R0},    for  every $h>0$ small enough and   $\alpha\in\N^d$,  $\partial^\alpha \psi_h$ is bounded uniformly with respect to $h$ on $\overline\Omega$. On the other hand, since $L_h^\star \, p_h=0$ (see \eqref{eq.Lstar}), a   direct computation shows that
\be\label{eq:formLhdag}
L_h^\dag  
= -\frac h2 \Delta  + (b+\nabla \psi_h) \cdot \nabla. 
\ee
Now, the same computation as the one leading to \eqref{eq:computLhwa1}
shows that
 \be\label{eq:computLhdagwa1}
 \Vert L_h^\dag w_h^{\rm{app}}\Vert_{L^2(p_h dx)}^2=\sum_{z\in\mathscr  P_{\rm sp}}
 \int_{\mathscr U^{\delta}_z} |L_h^\dag w_h^{\rm{app}}(x)|^2 p_h(x)dx+O(e^{-\frac 2h(\min_{\partial\Omega}f-f(x_0)+c)}).
 \ee
 Using the $v$-coordinates, we have: 
  \bes
 \Vert L_h^\dag w_h^{\rm{app}}\Vert_{L^2(p_h dx)}^2=\sum_{z\in\mathscr  P_{\rm sp}}
 \int_{\mathscr V^{\delta}_z} |\hat L_h^\dag \varphi_z(v)|^2 \hat p_h(v)\sqrt {|G|}dv+O(e^{-\frac 2h(\min_{\partial\Omega}f-f(x_0)+c)}).
 \ees
 Note that    
$\hat L_h^\dag =-\frac h 2\div\circ\, G^{-1}\circ\nabla+e_h\cdot\nabla$, where   $e_h$ is a smooth vector field  uniformly bounded with respect to $h$.   In particular, one has for every  $h$ small enough
 $$\hat L_h^\dag \varphi_z=O(  {e^{\xi^z/h}}/{N_{z,h}}).$$
  Using \eqref{eq:estimNhz} and the fact that 
 $$\hat p_h e^{2\xi^z/h}=h^{-\frac d2} e^{-\frac 2h(\min_{\partial\Omega}f-f(x_0))}\hat R^h e^{-\frac 2h (Q-\xi^z)},$$ we get,
since 
 $\int_{ \mathscr V^{\delta}_z}e^{-\frac 2h (Q-\xi^z)}dv=O\big(\int_{ \mathscr V^{\delta}_z}e^{-\frac 2h (Q-\xi_{0}^z)}dv\big)=O(h^{\frac{d+1}{2}})$,
 \bes
 \Vert L_h^\dag w_h^{\rm{app}}\Vert_{L^2(p_h dx)}^2=O(h^{-\frac 32} e^{-\frac 2h(\min_{\partial\Omega}f-f(x_0))})=O(h^{-1}) \lambda_h^{\rm{app}}.
 \ees
 This concludes the proof of {\rm [$\mathfrak 3$]}.
 \end{proof}
 
%
 \subsection{Proof of Theorem \ref{th:sharpeigenval}}
According to Theorem \ref{th.CMP}, for all $\beta>0$ small enough, the projector 
$$
 \pi_h:= \frac{1}{2i\pi}\int_{\{\vert z\vert =\beta \}} ( z -L_h)^{-1} dz
 $$
 is of rank one for all $h>0$ small enough, and more precisely $\Ran \pi_h= \Span (u_h)$. Moreover, Theorem \ref{th.CMP} implies that there exists $C>0$ such that for all $|z|=\beta,\;\Vert  ( z -L_h)^{-1}\Vert_{L^2(p_h dx)}\leq C$. In particular, one has $\pi_h=O(1)$ and for all 
 $u\in \mathscr D_2$, 
 $ \Vert  (1-\pi_h)  u \Vert_{L^2(p_hdx)}\le  C  \Vert L_hu   \Vert_{L^2(p_hdx)}$, which follows from the identity
 $$(1-\pi_h)u =\frac{-1}{2\pi i}\int_{\{\vert  z\vert =\beta \}}  z^{-1}\big    ( z-L_h)^{-1} L_hu \, d  z.$$
Thus, thanks  to Item  {\rm [$\mathfrak 2$]} in Proposition \ref{prop:calculQM}, one obtains that in the limit $h\to 0$:
\be\label{eq:estimprojQM}
 \pi_h   u_h^{\rm{app}} =  u_h^{\rm{app}}+  O(h^\infty | \lambda_h^{\rm{app}}|^{\frac 12})\ \ \text{in $L^2(\Omega,p_hdx)$}.
 \ee 
We thus have for every $h$ small enough,
\begin{align*}
 \lambda_h =\frac{\langle L_h  \pi_h u_h^{\rm{app}}, \pi_h u_h^{\rm{app}} \rangle_{L^2(p_hdx)}}{\|\pi_h u_h^{\rm{app}}\|_{L^2(p_hdx)}^{2}} &=\frac{ \langle L_hu_h^{\rm{app}} ,u_h^{\rm{app}} \rangle_{L^2(p_hdx)} +   E_h}{1+O(e^{-\frac ch})}\\
 &=   \lambda_h^{\rm{app}}  (1+O(h^\infty))+  E_h(1+O(e^{-\frac ch})),
\end{align*}
 where $E_h:=\langle L_h ( \pi_h-1)   u_h^{\rm{app}}, u_h^{\rm{app}}\rangle_{L^2(p_hdx)} +  \langle L_h  \pi_h  u_h^{\rm{app}},( \pi_h-1) u_h^{\rm{app}}\rangle_{L^2(p_hdx)}$  is the so-called projection error. It satisfies
 \bes
 | E_h|\le C   \Vert (\pi_h-1)  u_h^{\rm{app}}  \Vert_{L^2(p_hdx)}(\Vert L_h^\dag  u_h^{\rm{app}}  \Vert_{L^2(p_hdx)}+\Vert L_h  u_h^{\rm{app}}  \Vert_{L^2(p_hdx)})\ees
 which, combined with \eqref{eq:estimprojQM} and  {\rm [$\mathfrak 3$]} of Proposition \ref{prop:calculQM}, yields
\begin{align*}
 | E_h| =O(h^\infty)| \lambda_h^{\rm{app}}|^{\frac 12} \times O(h^{-\frac12})| \lambda_h^{\rm{app}}|^{\frac 12} + O(h^\infty) \lambda_h^{\rm{app}}=O(h^\infty) \lambda_h^{\rm{app}}. 
 \end{align*}
   This proves the desired result using Item  {\rm [$\mathfrak 1$]} in Proposition \ref{prop:calculQM}.







%
 \subsection{On  the  assumption {\rm \textbf{[A$\perp$]}}}
 \label{sec.Ap} 
  In statistical physics, we are typically given a vector field $b:\mathbf R^d\to \mathbf R^d$, that we assume to be smooth.  Metastability occurs when there exist several  stable equilibrium positions for  the dynamics \eqref{eq.flow}, see e.g.~\cite{di-gesu-lelievre-le-peutrec-nectoux-17} and references therein. Let $x_0$ be one of them and assume that
  \begin{equation}\label{eq.Ei}
  \text{all the eigenvalues of $\Jac b(x_0)$ have negative real parts.}
  \end{equation}
  Let us denote by  $B_{x_0}$  the basin of attraction of $x_0$ for the dynamics \eqref{eq.flow} 
and by $V(x)=V(x_0,x)$ the Freidlin-Wentzell quasi-potential from $x_0$ to $x\in \overline  B_{x_0}$ of the process \eqref{eq.langevin} in  $\overline B_{x_0}$ (see e.g.~\cite{FrWe} or~\cite[Equation (2.14)]{day-darden-85}).   
According to~\cite[Corollaries 2 and  7]{day-darden-85}, 
   $V(x)>0$ over $\mathbf R^d\setminus \{0\}$ and 
   there exists a connected open and dense subset $G_{x_0}$ of $B_{x_0}$ containing $x_0$ such that   $x \in G_{x_0}\mapsto V(x)$ is $\mathcal C^\infty$.   By~\cite[Section 4.3]{FrWe}, one deduces that  there exists a smooth vector field $\ell : G_{x_0}\to \mathbf R^d$ such that $b$ admits the following transversal decomposition over $G_{x_0}$:
   $$b= - \frac 12\nabla V + \ell  \text{ and } \nabla V\cdot \ell =0.$$ 
   This shows that the assumption \textbf{[A$\perp$]} is generic with $f=V/2$, at least in the (large) set $G_{x_0}$. 
   Note that  though the domain $G_{x_0}$ is very large in $B_{x_0}$, without any extra assumption, it might not necessarily be the whole space $B_{x_0}$.  We also mention that  by  \cite[Equation (4.2) and Theorem 3]{day1987r}, $\nabla V(x_0)=0$, the Hessian matrix $\Hess V(x_0)$ at   $x_0$ is positive definite, and all the statements in Theorem \ref{th.sheu}  holds true uniformly in the compacts of $G_{x_0}$ as well as the formula \eqref{eq.R0}. 
   
In conclusion, if  $x_0\in \mathbf R^d$  is   such that $b(x_0)=0$ and \eqref{eq.Ei} holds, then,   for any    smooth bounded  domain  $\Omega$ containing $x_0$ such that   $\overline \Omega \subset   G_{x_0}$ and   {\rm \textbf{[A$_{\mathscr  P_{\rm sp}}$]}} holds, the Eyring-Kramers  formula  \eqref{eq.EK}  holds uniformly for $x$ in the compact subsets of $\mathscr A_{\Omega}(x_0)$.

   \begin{remark}
  The regularity of the quasi-potential is at the heart of the transversal decomposition of $b$ which is   possible in the regions of $B_{x_0}$ where  $V$ is sufficiently smooth (say at least $\mathcal C^1$, see~\cite[Section 4.3]{FrWe}).  More generally, finding the regions where the quasi-potential is smooth is a very challenging task for the success of several asymptotic formulas in statistical physics~\cite{day-darden-85,dayexterior,day1987r,BoRe16}, as it is the case here.  
    \end{remark}

\noindent
\textbf{Acknowledgement}. 
This work was supported by the ANR-19-CE40-0010, Analyse Quantitative de Processus Métastables (QuAMProcs). B.N. is supported by the grant IA20Nectoux from the Projet I-SITE Clermont CAP 20-25.


\bibliographystyle{amsplain}
\bibliography{nonreversible}
\end{document}